\newcommand\unnumberedfootnote[1]{ %
        \let\temp=\thefootnote %
        \renewcommand{\thefootnote}{}%
        \footnote{#1}%
        \let\thefootnote=\temp%
        \addtocounter{footnote}{-1}}
\newtheorem{proposition}{Proposition}[section]
\theoremstyle{definition}
\newtheorem{remark}[proposition]{Remark}
\numberwithin{equation}{section}
\begin{document}
\title{\LARGE How often does the ratchet click?\\ Facts, heuristics, asymptotics.}

\thispagestyle{empty}

\author{{\sc by A. Etheridge\thanks{Research supported in part 
by EPSRC GR/T19537/01.}, P. Pfaffelhuber\thanks{Travel support
      from DFG, Bilateral Research Group
      FOR 498.} and A. Wakolbinger} \\[2ex]
  \emph{University of Oxford, Ludwig-Maximilians University Munich} \\
  \emph{and Goethe-University Frankfurt}\vspace*{-5ex}}  \date{}


\maketitle

\unnumberedfootnote{\emph{AMS 2000 subject classification.} {\tt
    92D15, 60J70} (Primary) {\tt 60K35, 60H35, 60H10} (Secondary).}

\unnumberedfootnote{\emph{Keywords and phrases.} Muller's ratchet,
  selection, mutation, Fleming-Viot process, Wright-Fisher model,
  diffusion approximation} \centerline {\date{17. July 2007}}
\begin{abstract}
\noindent 
The evolutionary force of recombination is lacking in asexually
reproducing populations. As a consequence, the population can suffer an 
irreversible accumulation of deleterious mutations, 
a phenomenon known as \emph{Muller's
  ratchet}. We formulate
discrete and continuous time versions of Muller's ratchet. Inspired by Haigh's (1978)
analysis of a dynamical system which arises in the limit of large
populations, we identify the parameter
$\gamma = N\lambda/(Ns\cdot \log(N\lambda))$ as most important for the
speed of accumulation of deleterious mutations. Here $N$ is population
size, $s$ is the selection coefficient and $\lambda $ is the
deleterious mutation rate. For large parts of the
parameter range, measuring time in units of size $N$,
deleterious mutations accumulate according to a power
law in $N\lambda$ with exponent $\gamma$ if $\gamma\ge 0.5$. For
$\gamma<0.5$ mutations cannot accumulate. We obtain
diffusion approximations for three different parameter regimes,
depending on the speed of the ratchet. Our approximations shed
new light on analyses of Stephan et al.~(1993) and Gordo \&
Charlesworth~(2000). The heuristics leading to the approximations
are supported by simulations.
\end{abstract}

\section{Introduction}
\label{intro}
Muller's ratchet is a mechanism that has been suggested as an explanation 
for the evolution of sex (Maynard Smith~1978).  
\nocite{MaynardSmith1978}
The idea is simple; in an asexually reproducing 
population chromosomes are passed down as indivisible blocks and so the 
number of deleterious mutations accumulated along any ancestral line in the
population can only increase.  When everyone in the current `best' 
class has accumulated at least one additional bad mutation then the 
minimum mutational load in the population increases: the ratchet clicks.
In a sexually reproducing population this is no longer the case; because
of recombination between parental genomes a parent
carrying a high mutational load 
can have offspring with {\em fewer} deleterious mutations.  The
high cost of sexual reproduction is thus offset by the benefits of 
inhibiting the ratchet.  
Equally the ratchet provides a possible explanation for the 
degeneration of $Y$ chromosomes in sexual organisms (e.g. Charlesworth~1978, 
1996; Rice~1994; 
Charlesworth \& Charlesworth~1997, 1998).
\nocite{Charlesworth1978}
\nocite{CharlesworthCharlesworth1997}
\nocite{CharlesworthCharlesworth1998}
\nocite{Charlesworth1996}
\nocite{Rice1994}
However, in order to assess its real 
biological importance one should establish
under what circumstances Muller's
ratchet will have an evolutionary effect.  In particular, 
{\em how many generations will it take for an asexually reproducing
population to lose its current best class?}
In other words, what is the rate of the ratchet?

In spite of the substantial literature devoted to the ratchet (see
Loewe~2006 for an extensive bibliography), even in the simplest mathematical
\nocite{Loewe2006}
models a closed form expression
for the rate remains elusive.  
Instead various approximations have
been proposed which fit well with simulations for particular parameter regimes.
The analysis presented here unifies these approximations into a single 
framework and provides a more detailed mathematical understanding of their
regions of validity.

The simplest mathematical model of the ratchet was formulated in the pioneering
work of Haigh~(1978).  Consider an asexual population of constant size $N$.  
\nocite{Haigh1978}
The population evolves according to classical Wright-Fisher dynamics.  
Thus each of the $N$
individuals in the $(t+1)$st generation independently
chooses a parent 
from the individuals in the $t$th generation.  The probability that an
individual 
which has accumulated $k$ mutations is selected is proportional to its
{\em relative} fitness, $(1-s)^k$.  
The number of mutations carried by the offspring is then $k+J$ where $J$ is 
an (independent) Poisson random variable with mean $\lambda$.

Haigh identifies $n_0=Ne^{-\lambda/s}$ as an approximation (at large times)
for the
total number of individuals carrying the least number of mutations 
and finds numerical evidence of a linear relationship between $n_0$
and the average time between clicks of the ratchet,
at least for `intermediate'
values of $n_0$ (which he quantifies as $n_0>1$ and less than $25$,
say).  On the other hand, for increasing values of $n_0$ Stephan et
al.~(1993) note the increasing \nocite{StephanChaoSmale1993}
importance of $s$ for the rate of the ratchet. The simulations of
Gordo \& Charlesworth~(2000) \nocite{GordoCharlesworth2000a} also
suggest that for $n_0$ fixed at a large value the ratchet can run at
very different rates.  They focus on parameter ranges that may be the
most relevant to the problem of the degeneration of large
non-recombining portions of chromosomes.

In our approach we use diffusion approximations to identify another
parameter as being an important factor in determining the rate of the
ratchet. We define
\begin{align}\label{scaling}
  \gamma:=\frac{N\lambda}{Ns\log(N\lambda)}.
\end{align}
Notice that $n_0=N(N\lambda)^{-\gamma}$.  In these parameters one
can reinterpret Haigh's empirical results as saying that if we measure time in
units of size $N$, then the rate of the ratchet follows a power law in
$N\lambda$.  In fact our main observation in this note is that
for a substantial portion of parameter space
(which we shall quantify a little more precisely later) we have the
following
\\\\
\noindent {\bf Rule of Thumb.\,}{\em The rate of the ratchet is of the
  order $N^{\gamma -1}\lambda^\gamma$ for $\gamma \in (1/2, 1)$,
  whereas it is exponentially slow in $(N\lambda)^{1-\gamma}$ for
  $\gamma <1/2$.}



~ 

There are two novelties here.  First, the abrupt change in behaviour
at $\gamma=1/2$ and second the power law interpretation of the rate
for $\gamma\in (1/2,1)$. As an appetiser, Figure \ref{fig1}
illustrates that this behaviour really is reflected in simulated data;
see also \S\ref{simulations}.

The rule of thumb breaks down for two scenarios: first, if $\gamma>1$
then for large $N\lambda$ we have $n_0<1$ and so our arguments, which
are based on diffusion approximations for the size of the best class, will
break down.
This parameter regime, which leads to very frequent clicks of the ratchet,
was studied by Gessler (1995). \nocite{Gessler1995} Second, 
if $N\lambda$ is {\em too} large then we see 
the transition from exponentially rare clicks to frequent clicks takes
place at larger values of $\gamma$.


\begin{figure}
\begin{center}
(A)
\vspace{-4ex}

\includegraphics[width=4in]{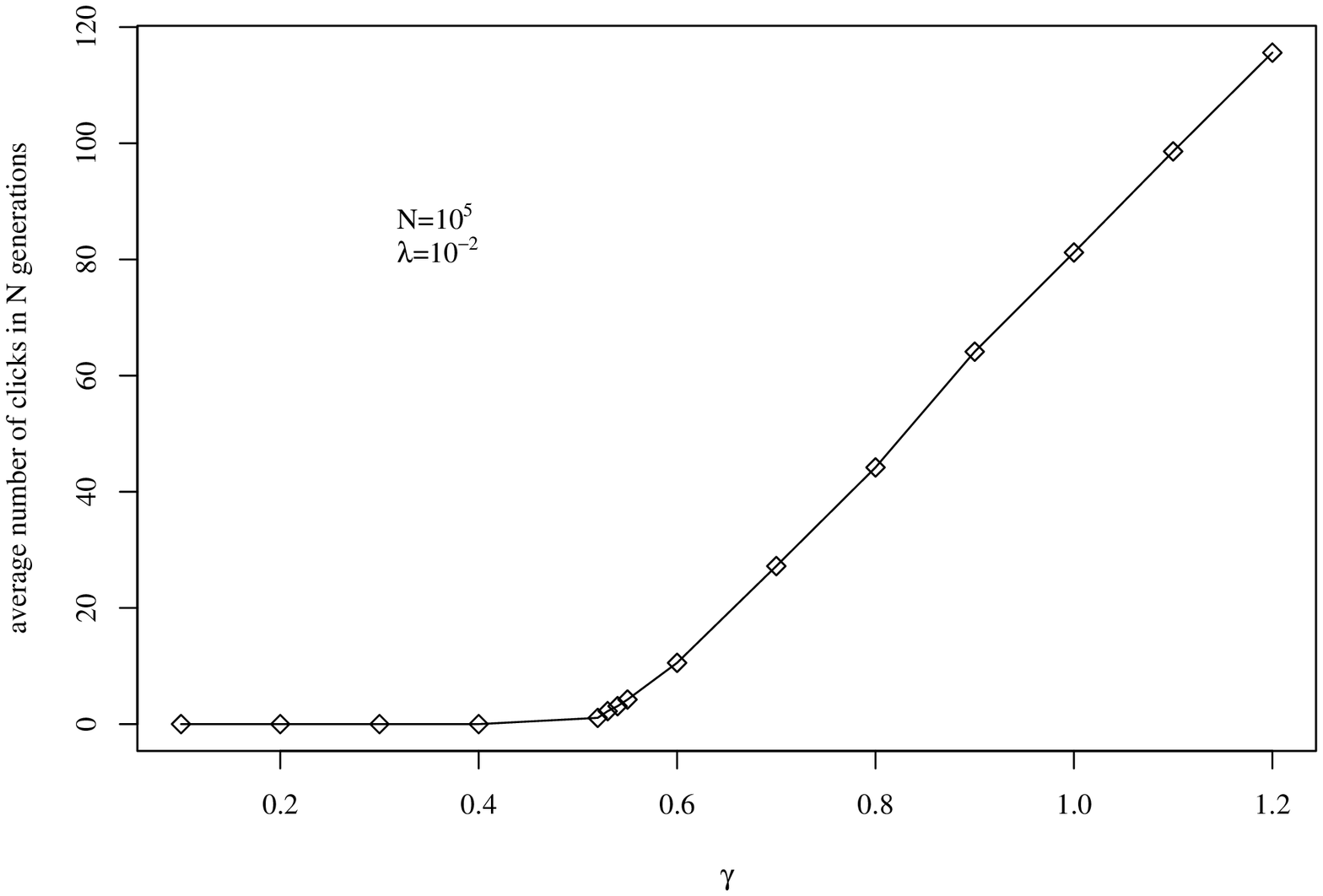}

(B)
\vspace{-4ex}

\includegraphics[width=4in]{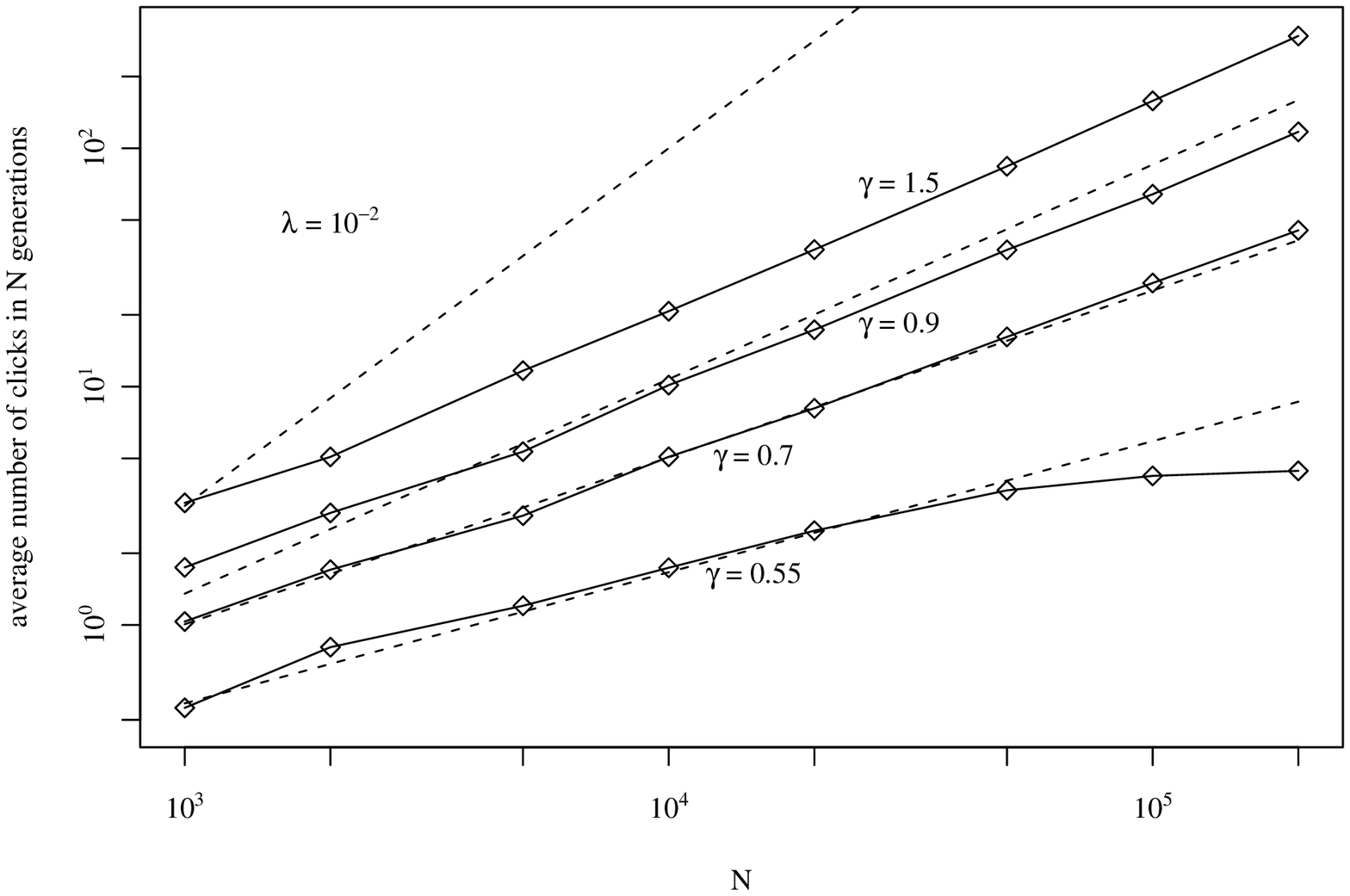}
\end{center}
\caption{\label{fig1}(A) We plot the rate of the ratchet against
  $\gamma$, where time is measured in units of $N$ generations. As
  predicted by our rule of thumb we see a sharp change of behaviour
  around $\gamma=0.5$.  (B) We see the power-law behaviour of the rate
  for various values of $\gamma$.  The solid lines are given by
  simulation of a Wright-Fisher model.  The dashed lines fit the
  prediction that the time between clicks is a constant times
  $N(N\lambda)^{-\gamma}$ for $\gamma >0.5$.  Note that this breaks
  down for $\gamma >1$.}
\end{figure}

The rest of this note is laid out as follows.  In \S\ref{haigh} we
review the work of Haigh~(1978).  Whereas Haigh's work focuses on
discrete time dynamics, in \S\ref{Fleming-Viot} we write down instead
a (continuous time) Fleming-Viot diffusion approximation to the model
whose behaviour captures the dynamics of large populations when $s$
and $\lambda$ are small.  We then pass in \S\ref{deterministic
  dynamics} to the infinite population (deterministic) limit.  This
system can be solved exactly and by, in the spirit of Haigh, using
this deterministic system to estimate the behaviour of the `bulk' of
the population we obtain in \S\ref{one-dimensional diffusion} a one
dimensional diffusion which approximates the size of the best class in
our Fleming-Viot system.  The drift in this one-dimensional diffusion
will take one of three forms depending upon whether the ratchet is
clicking rarely, at a moderate rate or frequently per $N$ generations
(but always rarely per generation).  Performing a scaling of this
diffusion allows us to predict the relationship between the parameters
$N\lambda$ and $Ns$ of the biological model and the value of $\gamma$
at which we can expect to see the phase transition from rare clicking
to power law behaviour of the rate of the ratchet.  In
\S\ref{simulations} we compare our predictions to simulated data, and
in \S\ref{discussion} we discuss the connection between our findings
and previous work of Stephan et al. (1993), Stephan \& Kim (2002) and
Gordo \& Charlesworth (2000).

\section{The discrete ratchet -- Haigh's approach} \label{haigh}

The population dynamics described in the introduction can be 
reformulated mathematically as follows.
Let $N$ be a fixed natural number (the population size), 
$\lambda > 0$ (the mutation parameter) 
and $s \in (0,1)$ (the selection coefficient).
The population is described by a stochastic process taking values in
$\mathcal P(\mathbb N_0)$, the simplex of probability weights 
on $\mathbb N_0$.
Suppose that 
${\bf x}(t) = (x_k(t))_{k=0,1,...}  \in \mathcal P(\mathbb N_0)$ is the 
vector of {\em type frequencies} (or {\em frequency profile}) in 
the $t$th generation (so for
example $Nx_k(t)$ individuals in the population carry exactly $k$
mutations). 
Let $H$ be an $\mathbb N_0$-valued random variable with $\mathbf P[H=k]$ 
proportional to $(1-s)^kx_k(t)$, let $J$ be a Poisson($\lambda$)-random 
variable 
independent of $H$ and let $K_1,..., K_N$ be independent copies of $H+J$. 
Then the random type frequencies in the next generation are
\begin{align}\label {DR} X_k(t+1) =  \frac1N \#\{i: K_i =k\}. 
\end{align}
We shall refer to this as the {\em ratchet dynamics in discrete time}.

First consider what happens as $N\rightarrow\infty$.  By the law of 
large numbers, 
\eqref{DR} results in the deterministic dynamics
\begin{align}\label{DDR} {\bf x}(t+1) 
 := 
\mathbf E_{\mathbf x(t)}[{\bf X}(t+1)].
\end{align} 
An important property of the dynamics
\eqref{DDR} is that vectors of Poisson weights are mapped to
vectors of Poisson weights. To see this, note that when $N\to\infty$
the right hand side of \eqref{DDR} is just the {\em law} of the random
variable $H+J$.  If $\mathbf x(t) =$ Poisson($\alpha$), the Poisson
distribution with mean $\alpha$, then $H$ is Poisson
($\alpha(1-s)$)-distributed and consequently $\mathbf x(t+1)$ is the law
of a Poisson($\alpha(1-s)+\lambda $) random variable.  We shall see in
\S\ref{deterministic dynamics} that the same is true of the continuous
time analogue of \eqref{DDR} and indeed we show there that
for every initial condition with $x_0>0$ the solution to the
continuous time equation converges to the stationary point
$$\pi := \mbox{Poisson}(\theta)$$ as $t\to\infty$, where
\begin{align*}\theta := \frac \lambda s.
\end{align*} 

Haigh's analysis of the finite population model focusses on the number of 
individuals in the best class.  
Let us write $k^*$ for the number
of mutations carried by individuals in this class.  In a finite population,
$k^*$ will increase with time, but the profile of frequencies 
{\em relative to $k^*$}, $\{X_{k+k^*}\}$, forms a recurrent Markov chain.  
We set
$$\mathbf Y := (Y_k)_{k=0,1,..} := (X_{k^*+k})_{k=0,1,..} $$  
and observe that since fitness is always
measured relative to the mean fitness in the population, between clicks of 
the ratchet the equation for the dynamics of 
${\mathbf Y}$ is precisely the same as that for ${\mathbf X}$ when $X_0>0$.
Suppose that after $t$ generations
there are $n_0(t) = N y_0(t)$ individuals in the best class.  
Then the probability of sampling a parent from this class
{\em and} not acquiring any additional mutations is
$$p_0(t):=(y_0(t)/W(t)) \, e^{-\lambda},$$ 
where 
\begin{align}\label{eq:fit}
  W(t) = \sum_{i=0}^\infty y_i(t)(1-s)^i.
\end{align}
Thus, given $y_0(t)$, the size of the best class in the next
generation has a binomial distribution with $N$ trials and success
probability $p_0(t)$, and so the evolution of the best class is
determined by $W(t)$, the mean fitness of the population.  We shall
see this property of $Y_0$ reflected in the diffusion approximation of
\S\ref{Fleming-Viot}.

A principal assumption of Haigh's analysis is that 
immediately before a click, the individuals of the current best class have 
all been distributed upon the other classes, in proportion to their 
Poisson weights.  Thus immediately after a click he takes the type 
frequencies (relative to the new best class) to be 
\begin{align}\label{pitilde}
\tilde \pi:= \frac 1{1-\pi_0} (\pi_1,\pi_2,\ldots)=  
\frac1{1-e^{-\theta}}\left( \theta e^{-\theta}, 
\frac{\theta^2}2 e^{-\theta}, ...\right).
\end{align}
The time until the next click is then subdivided into two phases.  During 
the first phase the deterministic dynamical system decays exponentially 
fast towards its Poisson equilibrium, swamping the randomness arising from 
the finite population size.  At the time that he proposes for the end of the 
first phase the size of the best class is approximately $1.6\pi_0$. 
The mean fitness of
the population has decreased by an amount which can also 
be readily estimated and,
combining this with a Poisson approximation to the binomial distribution, Haigh
proposes that (at least initially) during the second (longer)
phase the size of the best class should
be approximated by a Galton-Watson
branching process with a Poisson offspring distribution.

Haigh's original proposal was that since the mean fitness of the population 
(and consequently the 
mean number of offspring in the Galton Watson process)
changes only slowly during the second phase it 
should be taken to be constant throughout that phase.
Later refinements have modified Haigh's approach in two key ways.  First they
have worked with a diffusion approximation so that the Galton-Watson
process is replaced by a Feller diffusion and second, instead of taking 
a constant drift,
they look for a good approximation of the mean
fitness {\em given the size of the best class}, resulting in a 
Feller diffusion with {\em logistic} growth.  Our aim in the rest of this
paper is to unify these approximations in a single 
mathematical framework and discuss them in the light of
simulations.  
A crucial building block will be the
following extension of \eqref{pitilde}, which we call the {\em
  Poisson profile approximation} (or PPA) of $\mathbf Y$ based on
$Y_0$:
\begin{align}\label{PPA} \Pi(Y_0):= \Big(Y_0, \frac
  {1-Y_0}{1-\pi_0}(\pi_1,\pi_2,\ldots )\Big ).
\end{align}

As a first step, we now turn to a diffusion approximation for the 
full ratchet  dynamics \eqref{DR}.



\section{The Fleming-Viot diffusion}
\label{Fleming-Viot}

For large $N$ and small $\lambda$ and $s$, the following stochastic dynamics on 
$\mathcal P(\mathbb N_0)$ in continuous time captures the conditional 
expectation 
and variance of the discrete dynamics \eqref{DR}:
\begin{align}\label{FV}
dX_k = \left(\sum_{j}s(j-k)X_jX_k +\lambda (X_{k-1}-X_k)\right)dt + \sum_{j\neq k}\sqrt{\frac 1NX_jX_k} \, dW_{jk},\\ \nonumber  k=0,1,2,\ldots, 
\end{align}
where $X_{-1}:= 0$, and $(W_{jk})_{j>k}$ is an array of independent
standard Wiener processes with $W_{kj}:= -W_{jk}$.  This is, of
course, just the infinite-dimensional version of the standard
multi-dimensional Wright-Fisher diffusion. Existence of a process
solving \eqref{FV} can be established using a diffusion limit of the
discrete dynamics of \S\ref{haigh}. The coefficient $s(j-k)$ is the
fitness difference between type $k$ and type $j$,
$\lambda(X_{k-1}-X_k)$ is the flow into and out of class $k$ due to
mutation and the diffusion coefficients $\frac 1N X_jX_k$ reflect the
covariances due to multinomial sampling.

\begin{remark}
Often when one passes to a Fleming-Viot diffusion approximation, one 
measures time in units of size $N$ and correspondingly the 
parameters $s$ and $\lambda$ appear as $Ns$ and $N\lambda$.
Here we have {\em not} rescaled time, hence the factor of $\frac{1}{N}$ in
the noise and the unscaled parameters $s$ and $\lambda$ in the equations.
\hfill$\Box$
\end{remark}
\noindent
Writing
\begin{align*}
 M_1({\bf X}) := \sum_j jX_j
 \end{align*}
 for the first moment of $\bf X$, \eqref{FV} translates into
\begin{align}\label{eq:FV}
dX_k =\Big(s(M_1({\bf X})-k)-\lambda )X_k +\lambda X_{k-1}\Big) dt + \sum_{j\neq k}\sqrt{\frac 1NX_jX_k} \, dW_{jk}
\end{align}
In exactly the same way as in our discrete stochastic system, writing $k^*$ for
the number of mutations carried by individuals in the fittest class, one would 
like to think of the population as a travelling wave with profile
$Y_{k}=X_{k+k^*}$.
Notice in particular that 
\begin{equation*}
dY_0=(sM_1(\mathbf Y)-\lambda)Y_0dt+\sqrt{\frac{1}{N}Y_0(1-Y_0)}dW_0,
\end{equation*}
where $W_0$ is a standard Wiener process.  Thus, just as in Haigh's setting, 
the frequency of the best class is determined by the mean fitness of 
the population.

Substituting into \eqref{FV} one can obtain a stochastic equation for $M_1$,
$$dM_1=(\lambda -sM_2)dt+dG$$
where $M_2=\sum_j(j-M_1)^2X_j$ and the martingale $G$ has quadratic variation
$$d\langle G\rangle=\frac{1}{N}M_2dt.$$
Thus the speed of the wave is determined by the variance of the profile.
Similarly,
$$dM_2=(-\tfrac 1N M_2+(\lambda -sM_3))dt+dH$$
where $M_3=\sum_j(j-M_1)^3X_j$ and the martingale $H$ has quadratic
variation $d\langle H\rangle=\frac{1}{N}(M_4-M_2)dt$ with $M_4$
denoting the fourth centred moment and so on.

These equations for the centred moments 
are entirely analogous to those obtained 
by Higgs \& Woodcock~(1995) except that in the Fleming-Viot setting they are exact.  As
\nocite{HiggsWoodcock1995}
pointed out there, B\"urger~(1991) obtained similar equations to study the 
\nocite{Burger1991}
evolution of polygenic traits.  The difficulty in using these equations to study the rate
of the ratchet is of course that they are not closed: the equation for $M_k$
involves $M_{k+1}$ and so on.  Moreover, there is no obvious approximating
closed system.  By contrast, the infinite population limit, in which the noise is absent, turns out to have a closed solution.

\section{The infinite population limit}
\label{deterministic dynamics}

The continuous time analogue of \eqref{DDR} is the deterministic dynamical system
\begin{align}\label{cds}
dx_k =\Big((s(M_1({\bf x})-k)-\lambda )x_k +\lambda x_{k-1}\Big) dt, \quad k= 0,1,2,\dots
\end{align}
where $x_{-1}=0$, obtained by letting $N\to\infty$ in our Fleming-Viot
diffusion \eqref{eq:FV}. Our goal in this section is to solve this
system of equations. Note that Maia et al. (2003) have obtained a
complete solution of the corresponding discrete system following
\eqref{DDR}.  \nocite{MaiaBotelhoFontanari2003}

As we shall see in Proposition~\ref{prop1}, the stationary points of
the system are exactly the same as for \eqref{DDR}, that is $\bf x=
\pi$ and all its right shifts $ (\pi_{k-k^*})_{k=0,1,\ldots}$, $ k^*=
0,1,2,\dots $.  Since the Poisson distribution can be characterised as
the only distribution on $\mathbb N_0$ with all cumulants equal, it is
natural to transform \eqref{cds} into a system of equations for the
cumulants, $\kappa_k, k=1,2,\ldots$, of the vector $\bf x$.  The
cumulants are defined by the relation
\begin{align}\label{eq:siDef}
  \log \sum_{k=0}^\infty x_k e^{-\xi k} = \sum_{k=1}^\infty \kappa_k
  \frac{(-\xi)^k}{k!}.
\end{align} 
We assume $x_0>0$ and set
\begin{align}\label{eq:SI0}
\kappa_0 := -\log x_0.
\end{align}

\begin{proposition}\label{prop1}
  For $\kappa_k, k=0,1,2,\ldots$ as in
  \eqref{eq:siDef} and \eqref{eq:SI0} the system \eqref{cds} is
  equivalent to
  \begin{align*}
    \dot\kappa_k = -s\kappa_{k+1} + \lambda,\qquad\qquad k=0,1,2,\ldots
  \end{align*}
  Setting $\underline \kappa :=(\kappa_0,\kappa_1,\ldots)$ this system
  is solved by
\begin{equation}
\label{solution to cds}
   \underline \kappa = B \underline \kappa(0)^\top + \frac{\lambda}{s}(1-e^{-st})\underline 1,
  \quad B= (b_{ij})_{i,j=0,1,\ldots},\quad b_{ij} = \begin{cases}
    \frac{(-st)^{j-i}}{(j-i)!} & j\geq i\\0 & \text{otherwise.}
  \end{cases}
\end{equation}
In particular,
  \begin{align}\label{solutionx0}
    x_0(t) = e^{-\kappa_0(t)} = x_0(0) \frac{\exp\big( -
      \frac{\lambda}{s}(1-e^{-st})\big)}{\Big(\sum_{k=0}^\infty
      x_k(0) e^{-stk}\Big)}
  \end{align}
  and
  \begin{align}
    \label{solutionM}
    \kappa_1(t) = \sum_{k=0}^\infty k x_k(t) = \left.-\frac{\partial}{\partial\xi}
      \log \sum_{k=0}^\infty x_k(0) e^{-\xi k}\right|_{\xi=st}+
    \frac{\lambda}{s}(1-e^{-st}).
  \end{align}
\end{proposition}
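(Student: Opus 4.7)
}

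The plan is to pass to the (generating-function style) transform
\begin{align*}
\phi(\xi,t) := \sum_{k=0}^\infty x_k(t)\,e^{-\xi k},
\end{align*}
derive a PDE for $\log\phi$, and read off the announced ODE system by matching coefficients in the Taylor expansion in $\xi$. Once the cumulant ODEs are established, the linear system has an obvious shift structure that is solved by iterated differentiation.

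First I would substitute \eqref{cds} into $\partial_t\phi$. The drift term $-sk\,x_k$ produces $s\,\partial_\xi\phi$, the term $sM_1({\bf x})\,x_k$ produces $sM_1({\bf x})\,\phi$, the term $-\lambda x_k$ produces $-\lambda\phi$, and the shift $\lambda x_{k-1}$ produces $\lambda e^{-\xi}\phi$. Dividing by $\phi$ and using $M_1({\bf x})=\kappa_1$ gives
\begin{align*}
\partial_t \log\phi(\xi,t) \;=\; s\kappa_1(t) + s\,\partial_\xi \log\phi(\xi,t) + \lambda\bigl(e^{-\xi}-1\bigr).
\end{align*}
Writing $\log\phi(\xi,t)=\sum_{k\ge 1}\kappa_k(t)\,(-\xi)^k/k!$ and $\lambda(e^{-\xi}-1)=\lambda\sum_{k\ge 1}(-\xi)^k/k!$, and comparing the coefficient of $(-\xi)^k/k!$ for $k\ge 1$, one obtains $\dot\kappa_k=-s\kappa_{k+1}+\lambda$. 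The $k=0$ case is not contained in this series expansion, but follows directly from \eqref{cds} at $k=0$: since $x_{-1}=0$, $\dot x_0 = (sM_1({\bf x})-\lambda)x_0$, so $\dot\kappa_0=-\dot x_0/x_0=-s\kappa_1+\lambda$. This establishes the first assertion.

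Next I would solve the linear system. Setting $\eta_k:=\kappa_k-\lambda/s$ kills the inhomogeneity and gives the pure shift $\dot\eta_k=-s\eta_{k+1}$, hence by induction $\eta_k^{(n)}(0)=(-s)^n\eta_{k+n}(0)$. Taylor expanding in $t$ yields
\begin{align*}
\eta_k(t) \;=\; \sum_{n=0}^\infty \frac{(-st)^n}{n!}\,\eta_{k+n}(0),
\end{align*}
and reintroducing the constant $\lambda/s$ (using $\sum_n (-st)^n/n!=e^{-st}$) gives precisely the matrix form~\eqref{solution to cds}. To extract \eqref{solutionx0} I would apply this formula at $k=0$, recognise $\sum_{j=0}^\infty\kappa_j(0)(-st)^j/j!=-\log x_0(0)+\log\phi(st,0)$ by the defining relation \eqref{eq:siDef} together with \eqref{eq:SI0}, and exponentiate $-\kappa_0(t)$. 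For \eqref{solutionM}, the same formula at $k=1$ gives $\sum_{j\ge 1}\kappa_j(0)(-st)^{j-1}/(j-1)!$, which is exactly $-\partial_\xi\log\phi(\xi,0)$ evaluated at $\xi=st$.

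I don't see a deep obstacle here; the arguments are essentially bookkeeping on formal power series. The only genuine analytic point is justifying that the operations (differentiation term-by-term in $\xi$, interchanging $\partial_t$ with the sum over $k$, and convergence of the Taylor expansion in $t$) are legitimate. This will be true provided the initial profile $\mathbf x(0)$ has fast enough decay so that $\phi(\xi,0)$ is analytic in a neighbourhood of $\xi=0$ large enough to contain $st$ for the time horizon of interest; the Poissonian tails at equilibrium (and, more generally, any profile with super-exponential decay) make this automatic, so I would state the result under such a mild tail assumption and otherwise treat the identities formally, as is standard in cumulant-based analyses.
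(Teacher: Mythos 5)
Your proposal is correct and follows essentially the same route as the paper: pass to the transform $\sum_k x_k e^{-\xi k}$, obtain $\dot\kappa_k=-s\kappa_{k+1}+\lambda$ by comparing coefficients in $\xi$, and solve the shift-structured linear system (your substitution $\eta_k=\kappa_k-\lambda/s$ plus term-by-term Taylor expansion is just the component-wise form of the paper's matrix exponential $e^{-Dst}$ with the $\lambda\int_0^t e^{-Dsu}\underline 1\,du$ term). The only cosmetic differences are that the paper absorbs the $k=0$ case by working with $\log\bigl(\tfrac{1}{x_0}\sum_k x_k e^{-\xi k}\bigr)$ rather than treating $\kappa_0$ separately, and that you make explicit the convergence caveats and the derivation of \eqref{solutionx0}--\eqref{solutionM}, which the paper leaves implicit.
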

\begin{remark}
  If $\mathbf x(0)$ is a Poisson$(\mu)$ distribution then substituting
  into \eqref{solution to cds} we see that $\mathbf x(t)$ is a Poisson
  distribution with parameter $\lambda/s+e^{-st}(\mu -\lambda/s)$.  In
  other words, just as for the discrete dynamical system considered by
  Haigh, vectors of Poisson weights are mapped to vectors of Poisson
  weights.  In particular $\pi := \mbox{Poisson}(\lambda/s)$ is once
  again a stationary point of the system.  Moreover, this proposition
  shows that for any vector $\mathbf x(0)$ with $x_0(0)>0$, the
  solution converges to this stationary point. The corresponding
  convergence result in the discrete case is established in Maia et
  al.  (2003). More generally, if $k^*$ is the smallest value of $k$
  for which $x_k(0)>0$ then the solution will converge to
  $(\pi_{k-k^*})_{k=0,1,2,\ldots}$.
\end{remark}

\begin{proof}[Proof of Proposition~\ref{prop1}]
Using \eqref{cds} we have
\begin{align*}
  \frac{d}{dt} & \log \Big( \frac{1}{x_0} \sum_{k=0}^\infty x_k
  e^{-\xi k}\Big) = \frac{x_0}{\sum_{k=0}^\infty x_k e^{-\xi k}} \Big(
  - \frac{\sum_{k=0}^\infty x_k e^{-\xi k}}{x_0^2} \dot x_0 +
  \frac{1}{x_0} \sum_{k=0}^\infty \dot x_ke^{-\xi k}\Big) \\
  & = -s \sum_{j=0}^\infty j x_j + \lambda - s\frac{\sum_{k=0}^\infty
    k x_k e^{-\xi k}}{\sum_{k=0}^\infty x_k e^{-\xi k}} +
  s\sum_{j=0}^\infty j x_j \\ & \qquad \qquad \qquad \qquad +
  \frac{\lambda}{\sum_{k=0}^\infty x_k e^{-\xi k}} \Big(e^{-\xi}
  \sum_{k=1}^\infty x_{k-1} e^{-\xi (k-1)} - \sum_{k=0}^\infty x_k
  e^{-\xi k}\Big) \\
  & = s \frac{d}{d\xi} \log\Big( \sum_{k=0}^\infty x_k e^{-\xi
    k}\Big) + \lambda e^{-\xi}.
\end{align*}
Thus, \eqref{eq:siDef} gives
\begin{align*}
  \frac{d}{dt} \sum_{k=0}^\infty \kappa_k \frac{(-\xi)^k}{k!} = -s
  \sum_{k=0}^\infty \kappa_{k+1} \frac{(-\xi)^{k}}{k!} + \lambda
  e^{-\xi}.
\end{align*}
Comparing coefficients in the last equation we obtain
\begin{align*}
  \dot \kappa_k = -s \kappa_{k+1} + \lambda,\qquad \qquad k=0,1,\ldots.
\end{align*}
This linear system can readily be solved. We write
$$ D:=(\delta_{i+1,j})_{i,j=0,1,2,\ldots},\qquad \underline 1 =
(1,1,\ldots),$$ so that
\begin{align}\label{eq:smLS}
  \dot{\underline \kappa}^\top = -sD \underline \kappa^\top +
  \lambda\underline 1^\top.
\end{align}
Since
\begin{align*}
  (e^{-Dst})_{ij}= \begin{cases} \frac{(-st)^{j-i}}{(j-i)!} & j\geq
    i\\0 & \text{otherwise,} \end{cases}
\end{align*}
the linear system \eqref{eq:smLS} is solved by
$$ \underline \kappa(t)^\top = e^{-Dst} \underline \kappa(0)^\top 
+ \lambda\int_0^t e^{-Dsu}\underline 1 du = e^{-Dst} \underline
\kappa(0)^\top + \frac{\lambda}{s} ( 1 - e^{-st}) \underline 1^\top.$$
\end{proof}
\begin{remark}\label{phase1} With the initial condition $\mathbf x(0) := \tilde \pi$ given by \eqref{pitilde}, equations \eqref{solutionx0} and \eqref{solutionM} become
\begin{align}\label{eq:p1}
x_0(t) =  e^{-\theta} \frac{\theta e^{-st}}{1-e^{-\theta e^{-st}}}
\end{align} 
and
\begin{align}\label{eq:p2}
\kappa_1(t) = \theta -1 + \frac{\theta e^{-st}}{e^{\theta e^{-st}}-1}.
\end{align} 
At time 
\begin{align}\label{tau}
\tau := \frac {\log \theta}s,
\end{align}
we have
$x_0(\tau)=e^{-\theta}\frac 1{1-e^{-1}} \approx 1.6 \pi_0$. Comparing with
\S\ref{haigh} we see that in our continuous time setting $\tau$ is precisely
the counterpart of the time proposed by Haigh as the end of `phase one'.
\end{remark}

In \S\ref{one-dimensional diffusion} our prediction for $M_1 (\mathbf
Y)$ given $Y_0$ will require the value of $M_1(\mathbf y(\tau))$ for $
\mathbf y$ solving \eqref{cds} when started from a Poisson profile
approximation. This is the purpose of the next proposition.

\begin{proposition}\label{p:relax}
  For $y_0 \in (0,1)$, let $\mathbf y(t)$ be the solution of
  \eqref{cds} with the initial state $\mathbf y(0) :=\Pi(y_0)$ defined
  in \eqref{PPA}, and let $\tau$ be Haigh's relaxation time defined in
  \eqref{tau}. Then for $A\ge 0$ with $\eta := \theta^{1-A}$
%
  \begin{align*}
    M_1(\mathbf y(A\tau)) = \theta + \frac{\eta}{e^\eta-1}\Big( 1 -
    \frac{y_0(A\tau)}{\pi_0 }\Big).
  \end{align*}
\end{proposition}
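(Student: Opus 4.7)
The plan is to invoke Proposition~\ref{prop1} with the initial state $\mathbf y(0)=\Pi(y_0)$, and then eliminate the initial datum $y_0$ in favour of $y_0(A\tau)$. The starting profile is a convex combination of $\delta_0$ and the Poisson$(\theta)$ distribution conditioned to be positive, so its generating function is explicit:
\[
F(u):=\sum_{k\ge 0}y_k(0)\,u^k = y_0+\frac{(1-y_0)e^{-\theta}}{1-e^{-\theta}}\bigl(e^{\theta u}-1\bigr),\qquad F'(u)=\frac{(1-y_0)\,\theta\, e^{-\theta}}{1-e^{-\theta}}\,e^{\theta u}.
\]

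Next I would specialise \eqref{solutionx0} and \eqref{solutionM} to $t=A\tau$, using that $st=A\log\theta$, $e^{-st}=\theta^{-A}$, and hence $\theta e^{-st}=\theta^{1-A}=\eta$. Since $\pi_0=e^{-\theta}$, these become
\[
\frac{y_0(A\tau)}{\pi_0}=\frac{y_0\,e^{\eta}}{F(\theta^{-A})},\qquad M_1(\mathbf y(A\tau))=\theta-\eta+\frac{\eta\,e^{\eta}(1-y_0)e^{-\theta}}{(1-e^{-\theta})\,F(\theta^{-A})},
\]
so both quantities of interest are rational expressions in $y_0$ with the common denominator $F(\theta^{-A})$.

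The remaining work is pure algebraic bookkeeping. The key identity, immediate from the definition of $F$, is
\[
F(\theta^{-A})-y_0\,e^{\eta}=(e^{\eta}-1)\cdot\frac{e^{-\theta}-y_0}{1-e^{-\theta}}.
\]
Dividing by $F(\theta^{-A})$ and multiplying by $\eta/(e^{\eta}-1)$ yields
\[
\frac{\eta}{e^{\eta}-1}\Bigl(1-\frac{y_0(A\tau)}{\pi_0}\Bigr)=\frac{\eta\,(e^{-\theta}-y_0)}{(1-e^{-\theta})\,F(\theta^{-A})}.
\]
Rewriting the expression for $M_1(\mathbf y(A\tau))-\theta$ displayed above by means of the rearrangement $(1-e^{-\theta})F(\theta^{-A})=(1-y_0)e^{-\theta}e^{\eta}-(e^{-\theta}-y_0)$ of the same identity shows that the two right-hand sides agree, giving the claim. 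No conceptual obstacle arises; the only care needed is the sign in the logarithmic derivative appearing in \eqref{solutionM} and the verification of the single algebraic identity above.
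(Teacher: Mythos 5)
Your proposal is correct and follows essentially the same route as the paper: compute the generating function of $\Pi(y_0)$, evaluate \eqref{solutionx0} and \eqref{solutionM} at $\xi=sA\tau$ where $\theta e^{-sA\tau}=\eta$, and then relate $M_1(\mathbf y(A\tau))$ to $1-y_0(A\tau)/\pi_0$ algebraically. The only (cosmetic) difference is in the last step: the paper inverts the map $y_0\mapsto y_0(A\tau)$ and substitutes back, whereas you equate two rational expressions in $y_0$ with common denominator $F(\theta^{-A})$ via the identity $F(\theta^{-A})-y_0e^{\eta}=(e^{\eta}-1)\frac{\pi_0-y_0}{1-\pi_0}$, which is a correct and slightly leaner way to finish the same computation.
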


\begin{proof}
  Since
  \begin{align*} \sum_{k=0}^\infty \pi_k e^{-\xi k} = \exp\big(
    -\theta(1-e^{-\xi})\big) = \pi_0^{1-e^{-\xi}},
  \end{align*}
  we have
  \begin{align*} 
    \sum_{k=0}^\infty y_k e^{-\xi k}\Big|_{\xi = sA\tau} = y_0 +
    \frac{1-y_0}{1-\pi_0} \pi_0\big( e^{\theta e^{-\xi}}
    -1\big)\Big|_{\xi = sA\tau} = y_0 + \frac{1-y_0}{1-\pi_0} \pi_0
    (e^\eta-1)
  \end{align*}
  and
  \begin{align*}
    -\frac{\partial}{\partial \xi} \sum_{k=0}^\infty y_k e^{-\xi
      k}\Big|_{\xi = sA\tau} = \frac{1-y_0}{1-\pi_0}
    \pi_0^{1-e^{-\xi}}\theta e^{-\xi}\Big|_{\xi = sA\tau} =
    \frac{1-y_0}{1-\pi_0}\pi_0 e^\eta \eta.
  \end{align*}
  Using the solution \eqref{solutionx0} and \eqref{solutionM} and
  $y_0(0)=y_0$
  \begin{align}\notag
    y_0(A\tau) &=
    y_0\frac{\pi_0 e^\eta}{y_0 + \frac{1-y_0}{1-\pi_0}\pi_0(e^\eta-1)} \\
    \label{eq:relPPA4}
    & = y_0\frac{\pi_0 e^\eta(1-\pi_0)}{y_0(1-\pi_0e^\eta) +
      \pi_0(e^\eta-1)},\\
    \notag M_1(\mathbf y(A\tau)) & = \frac{\frac{1-y_0}{1-\pi_0}\pi_0 e^\eta \eta
    }{y_0 +
      \frac{1-y_0}{1-\pi_0} \pi_0(e^\eta-1)} + \theta - \eta  \\
    & = \theta + \eta \frac{\pi_0-y_0}{y_0(1-\pi_0e^\eta) +
      \pi_0(e^\eta-1)}.\label{eq:relPPA5}
  \end{align}
  From \eqref{eq:relPPA4}, 
  \begin{align*}
    y_0 = \frac{y_0(A\tau) \pi_0(e^\eta-1)}{\pi_0 e^\eta(1-\pi_0) -
      y_0(A\tau)(1-\pi_0 e^\eta)} 
  \end{align*}
  and thus
  \begin{align*}
    &\pi_0-y_0 = 
       \frac{\pi_0e^\eta(\pi_0 -
      y_0(A\tau))(1-\pi_0)}{\pi_0e^\eta(1-\pi_0) -
      y_0(A\tau)(1-\pi_0e^\eta)},
 \\
 &y_0(1-\pi_0 e^\eta) + \pi_0(e^\eta-1) = \pi_0(e^\eta-1)
    \frac{\pi_0e^\eta(1-\pi_0)}{\pi_0e^\eta(1-\pi_0) -
      y_0(A\tau)(1-\pi_0e^\eta)}.
  \end{align*}
  Plugging the last two equations into \eqref{eq:relPPA5} we find
  \begin{align*}
    M_1(\mathbf y(A\tau)) & = \theta + \frac{\eta}{e^\eta-1}\Big( 1 -
    \frac{y_0(A\tau)}{\pi_0 }\Big).
  \end{align*}
\end{proof}

\section{One dimensional diffusion approximations}
\label{one-dimensional diffusion}
Recall from \S\ref{Fleming-Viot} that in our Fleming-Viot model the 
frequency $Y_0$ of the best class follows
\begin{align}\label{bestclass}
dY_0 =\Big(
s M_1({\bf Y})-\lambda \Big)Y_0 dt + \sqrt{\frac 1N Y_0(1-Y_0)} \, dW_0,
\end{align}
where $W_0$ is a standard Wiener process.
The system of 
equations \eqref{eq:FV} is too complex for us to be able to find an 
explicit expression for $M_1({\bf Y})$, which depends on the whole
vector ${\bf Y}$ of class sizes.  Instead we seek a good
approximation of $M_1$ given $Y_0$.  Substituting this into
equation~(\ref{bestclass}) will then yield a one-dimensional 
diffusion which we use as an approximation for the size of the best class.
Of course this assumption of a functional dependence between $Y_0$ and 
$M_1$ is a weakness of the one-dimensional diffusion approximation, 
but simulations show that there is
a substantial correlation between $Y_0$ and $M_1$,
see, for example, Figure~\ref{fig2}.

\begin{figure}
\includegraphics[width=4in]{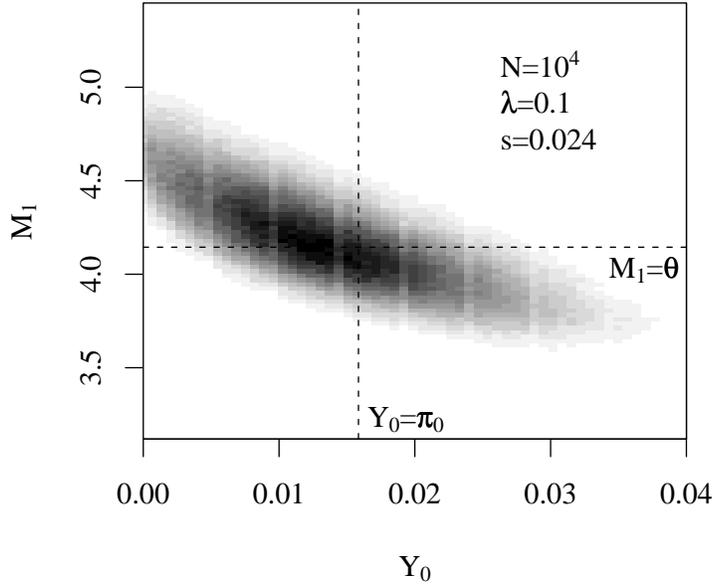}

\caption{\label{fig2}Using simulations (see also \S\ref{simulations})
  we plot $(Y_0,M_1)$. There is a good fit to a linear relationship
  between $Y_0$ and $M_1$. Note that $\gamma=0.6$ in the figure. }
\end{figure}

To understand our approach to finding a map $Y_0\mapsto M_1$, 
recall as a first step Haigh's approximation that immediately
after a click of the ratchet the profile has the form \eqref{pitilde}. 
The reasoning is as follows. Deviations of $\mathbf Y$ from a Poisson
profile can only be due to the randomness arising from resampling
in a finite population.  Since resampling has no
tendency to increase or decrease the frequency of a given class, the
average profile immediately after a click of the ratchet is
approximated by the state where $\pi_0$ is distributed evenly over all
other
classes according to their equilibrium frequencies.  During his short
`phase one', Haigh then allows this profile to `relax' through the
action of the discrete dynamical system~(\ref{DDR}) and it
is the mean fitness in the population after this short relaxation time
which determines the behaviour of the best class during `phase two'.

A natural next step in extending this argument
is to suppose that also {\em in between} click times the resampling
distributes the mass $\pi_0-Y_0$ evenly on all other classes. In other words,
given $Y_0$, approximate the state of the system by $\mathbf Y =
\Pi(Y_0)$ given by \eqref{PPA}.

Of course in reality the dynamical system interacts with the
resampling as it tries to restore the system to its Poisson
equilibrium.  If this restoring force is strong, just as in Haigh's
approach one estimated mean fitness during phase two from the
`relaxed' profile, so here one should approximate the mean fitness
$M_1$ not from the PPA, but from states which arise by evolving the
PPA using the dynamical system for a certain amount of time.  We call
the resulting states \emph{relaxed Poisson Profile approximations} or
RPPA. There are three different parameter regimes with which we shall
be concerned. Each corresponds to a different value of $\eta$ in
the functional relationship
\begin{align} \label{eq:functRel}
  M_1 = \theta + \frac{\eta}{e^\eta-1}\Big( 1 - \frac{Y_0}{\pi_0}\Big).
\end{align}
of Proposition \ref{p:relax}. These can be distinguished as follows:

\begin{subequations}
  \label{eq:three}
  \begin{align}
  \label{eq:threea}
    &A \text{ small},\quad &&\eta \approx \theta,\quad &&
    M_1\approx \frac{\theta}{1-\pi_0}( 1 - Y_0),\\
    \label{eq:threeb}
    &A = 1 ,\quad &&\eta = 1,\quad && M_1 \approx \theta + 0.58 \Big(
    1 - \frac{Y_0}{\pi_0}\Big),\\
    \label{eq:threec}
    &A \text{ large},\quad &&\eta \approx 0,\quad && M_1 \approx \theta
    + \Big( 1 - \frac{Y_0}{\pi_0}\Big)
  \end{align}
\end{subequations}
The resulting maps $Y_0\mapsto M_1$ are plotted in Figure
\ref{fig:three}. Observe that for consistency, $M_1$ has to increase,
on average, by 1 during one click of the ratchet.

\begin{figure}
   \includegraphics[width=11cm]{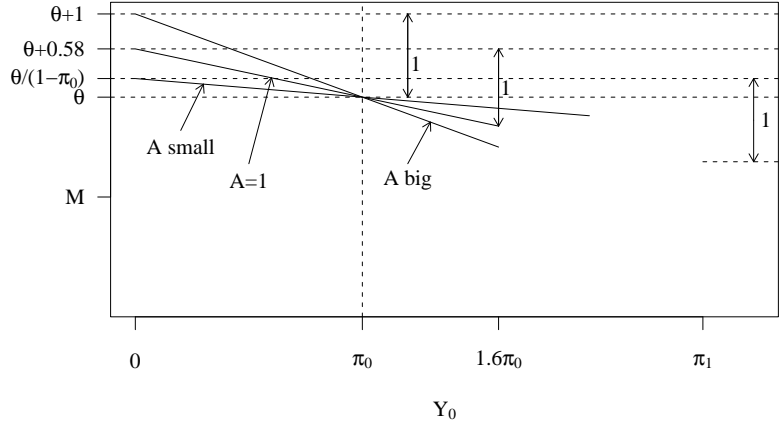}

   \caption{\label{fig:three}Since simulations show a strong
     correlation between the first moment $M_1$ and $Y_0$, we use
     \eqref{eq:threea}-\eqref{eq:threec} to predict $M_1$ from $Y_0$
     depending on the model parameters.}
\end{figure}

Finally, before we can apply our one-dimensional diffusion
approximation we must choose a starting value for $Y_0$ following
equation~(\ref{bestclass}).  For $A$ large, the system is already
close to its new equilibrium at the time of a click and so we take
$Y_0=\pi_0$.  

For $A=1$, at the time of the click we observe a state which has
relaxed for time $\tau$ from a state of the form $\tilde \pi$ from
\eqref{pitilde}. We computed in Remark \ref{phase1} that such a state
comes with $Y_0=1.6 \pi_0$.

For small values of $A$, observe that the profile of the population
immediately after a click is approximately $\tilde\pi$ from
\eqref{pitilde}. Since $\tilde\pi$ is not a state of the form
$\Pi(y_0)$ the arguments that led to Proposition \ref{p:relax} do not
apply. Instead we follow Haigh in dividing the time between clicks
into two phases.  Consider first `phase one'.  Recall from the
dynamical system that
\begin{align*}
  dY_0 = (sM_1 - \lambda)Y_0 \, dt.
\end{align*}
We write
\begin{align*} (sM_1 - \lambda)Y_0 = c\ (\pi_0-Y_0),\end{align*} where
$c$ (like $Y_0$ and $M_1$) depends on $r=\theta e^{-st}$.  Starting in
$\mathbf Y(0) = \tilde\pi$ we have from \eqref{eq:p1} and
\eqref{eq:p2}
\begin{align*}
  Y_0(t) &= e^{-\theta} \frac{\theta e^{-st}}{1-\exp(-\theta e^{-st})}
  = e^{-\theta}\frac{r}{1-e^{-r}},\\ 
  M_1(t) & = \theta - 1 + \frac{\theta e^{-st}}{\exp(\theta
    e^{-st})-1} = \theta-1+\frac{r}{e^{r}-1}.
\end{align*}
We compute
\begin{align*}
  \frac cs = \frac{(M_1 -\theta)Y_0}{ \pi_0-Y_0} = \frac{1 -
    \frac{r}{e^r-1}}{1 - \frac{1-e^{-r}}{r}} =
  \frac{r(1-e^{-r})-r^2e^{-r}}{r(1-e^{-r}) - (1-e^{-r})^2}.
\end{align*}
It can be checked that this expression lies between $1$ and $1.25$ for
all $r > 0$ which suggests that the size of the best class in the
initial phase after a click is reasonably described by the dynamics
\begin{align}\label{ph1stoch}
 dY_0 = s\ (\pi_0-Y_0)\, dt + \sqrt{\frac 1N Y_0} \, dW_0
\end{align}
started from $\frac{\pi_1}{(1-\pi_0)}$.  We allow $Y_0$ to evolve
according to equation~(\ref{ph1stoch}) until it reaches $1.6\pi_0$,
say, and then use our estimate of $M_1$ from equation~(\ref{eq:three})
to estimate the evolution of $Y_0$ during the (longer) `phase two'.
\bigskip

We assume that states of the ratchet are RPPAs, i.e., Poisson profile
approximations \eqref{PPA} which are relaxed for time $A\tau$, where
$\tau = \tfrac 1s \log\theta$, which leads to the functional
relationship \eqref{eq:functRel}. Consequently, we suggest that
\eqref{bestclass} is approximated by the `mean reversion' dynamics
\begin{align}\label{fellog2}
  dY_0 = s \frac{\eta}{e^\eta-1} \big( 1 - \tfrac{Y_0}{\pi_0}\big) Y_0
  dt + \sqrt{\frac 1N Y_0} \, dW_0,
\end{align}
with $\eta=\theta^{1-A}$, where we have used a Feller noise instead of
the Wright-Fisher term in \eqref{bestclass}.  In other words, $Y_0$ is
a {\em Feller branching diffusion with logistic growth}.

Using the three regimes from \eqref{eq:three}, we have the
approximations
\begin{subequations}
  \label{eq:three2}
  \begin{align}
  \label{eq:three2a}
  &A \text{ small},\quad &&
  dY_0 = \lambda (\pi_0 - Y_0)Y_0 dt + \sqrt{\frac 1N Y_0} dW, \\
  \label{eq:three2b}
  &A = 1 ,\quad && dY_0 = 0.58 s \Big( 1 - \frac{Y_0}{\pi_0}\Big) Y_0
  dt + \sqrt{\frac 1N Y_0} \, dW_0,\\
  \label{eq:three2c}
  &A \text{ large},\quad && dY_0 = s \Big( 1 - \frac{Y_0}{\pi_0}\Big)
  Y_0 dt + \sqrt{\frac 1N Y_0} \, dW_0,
  \end{align}
\end{subequations}
(where in the first equation we have used that
$\frac{1}{1-\pi_0}\approx 1+\pi_0$ and that $Y_0\pi_0$ is negligible).

An equation similar to ~(\ref{eq:three2b}) was found (by different
means) by Stephan et al (1993) and further discussed in Gordo \&
Charlesworth (2000). Stephan and Kim (2002) analyse
\nocite{StephanKim2002} whether a prefactor of 0.5 or 0.6 in
~(\ref{eq:three2b}) fits better with simulated data. We discuss
the relationship with these papers in detail in \S\ref{discussion}.

The expected time to extinction of a diffusion following
\eqref{fellog2} is readily obtained from a Green function calculation
similar to that in Lambert~(2005).  We refrain from doing this here,
but instead use a scaling argument to identify parameter ranges for
which the ratchet clicks and to give evidence for the rule of thumb
formulated in the introduction.  \nocite{Lambert2005}

\medskip
Consider the rescaling 
$$Z(t)=\frac{1}{\pi_0}Y_0\left(N\pi_0 t\right).$$
For $A$ small equation~(\ref{eq:three2a}) becomes
\begin{align}\notag
dZ & =N\lambda \pi_0^2 (1-Z)Zdt +\sqrt{Z}dW\\
& =(N\lambda)^{1-2\gamma}(1-Z)Zdt
+\sqrt{Z}dW.
\label{rescaled PPA}
\end{align}
For $A=1$ on the other hand we obtain from \eqref{eq:three2b}
\begin{align}\notag
dZ & = 0.58Ns\pi_0(1-Z)Zdt +\sqrt{Z}dW \\
 &= 0.58\frac{1}{\gamma\log(N\lambda)}
(N\lambda)^{1-\gamma}(1-Z)Zdt
+\sqrt{Z}dW.
\label{rescaled SC}
\end{align}
For $A$ large we obtain from \eqref{eq:three2c} the same equation
without the factor of $0.58$.

 From this rescaling we see that  the equation that applies for small $A$, i.e. \eqref{rescaled PPA}, is strongly
mean reverting for $\gamma <1/2$.  Recall that the choice of small $A$
is appropriate when the ratchet is clicking frequently and so this
indicates that frequent clicking simply will not happen for $\gamma <1/2$.  To indicate the boundary between rare and moderate clicking, equation \eqref{rescaled SC} is much more relevant than equation 
\eqref{rescaled PPA}. 
At first sight, equation \eqref{rescaled SC} looks strongly mean reverting for all $\gamma < 1$, which would seem to suggest
that the ratchet will click only exponentially slowly in
$(N\lambda)^{1-\gamma}$. However, the closer $\gamma$ is to one, the larger
the value of $N\lambda$ we must take for this asymptotic regime to
provide a good approximation.  For example, in the table below we
describe parameter combinations for which the coefficient in front of
the mean reversion term in equation~(\ref{rescaled SC}) is at least
five.  We see that for $\gamma < 1/2$ this coefficient is large for most of the reasonable values of $N\lambda$, whereas for  $\gamma > 1/2$ it is rather small over a large range of  $N\lambda$.
\begin{center}
  \begin{tabular}{|c||c|c|c|c|c|c|c|c|} \hline
    \rule[-4mm]{0cm}{1cm}$\gamma$ & $0.3$ & $0.4 $   &$0.5$ & $0.55 $ & $0.6$ & $0.7$ &
    $0.8$ & $0.9$ \\\hline \rule[-4mm]{0cm}{1cm}$N\lambda\ge$ &
    $20 $&
    $10^2 $&$ 9\cdot
    10^2$ & $4\cdot 10^3$ & $2\cdot 10^4$ & $4\cdot 10^6$ & $2\cdot
    10^{11}$ & $8\cdot 10^{26}$ \\\hline
  \end{tabular}
\end{center}
\noindent
Thus, for example, if $\gamma=0.7$ we require $N\lambda$ to be of the
order of $10^6$ in order for the strong mean reversion of
equation~(\ref{rescaled SC}) to be evident.  This is not a value of
$N\lambda$ which will be observed in practice.  Indeed, as a `rule of
thumb', for biologically realistic parameter values, we should expect
the transition from no clicks to a moderate rate of clicks to take
place at around $\gamma=0.5$.

\section{Simulations}
\label{simulations} 
We have argued that the one-dimensional diffusions \eqref{eq:three2}
approximate the frequency in the best class and from this deduced the
\emph{rule of thumb} from \S\ref{intro}.  In this section we use
simulations to test the validity of our arguments.

For a population following the dynamics \eqref{DR}, the $(t+1)$st generation
is formed by multinomial sampling of $N$ individuals with weights
\begin{equation}
\label{multinomial weights}
p_k(t)=\sum_{j=0}^k\frac{x_{k-j}(t)(1-s)^{k-j}}{W(t)}e^{-\lambda}
\frac{\lambda^j}{j!},
\end{equation}
where $W(t)$ is the average fitness in the $t$th generation from
\eqref{eq:fit} and it is this Wright-Fisher model which was
implemented in the simulations.
 \begin{figure}
  \begin{center}
    \vspace{-.3cm}
    \hspace{.5cm} (A) \hspace{5cm} (B)
    \vspace{-.3cm}
  \end{center}
  \begin{center}
    \vspace{-1cm}
    \includegraphics[width=5.5cm]{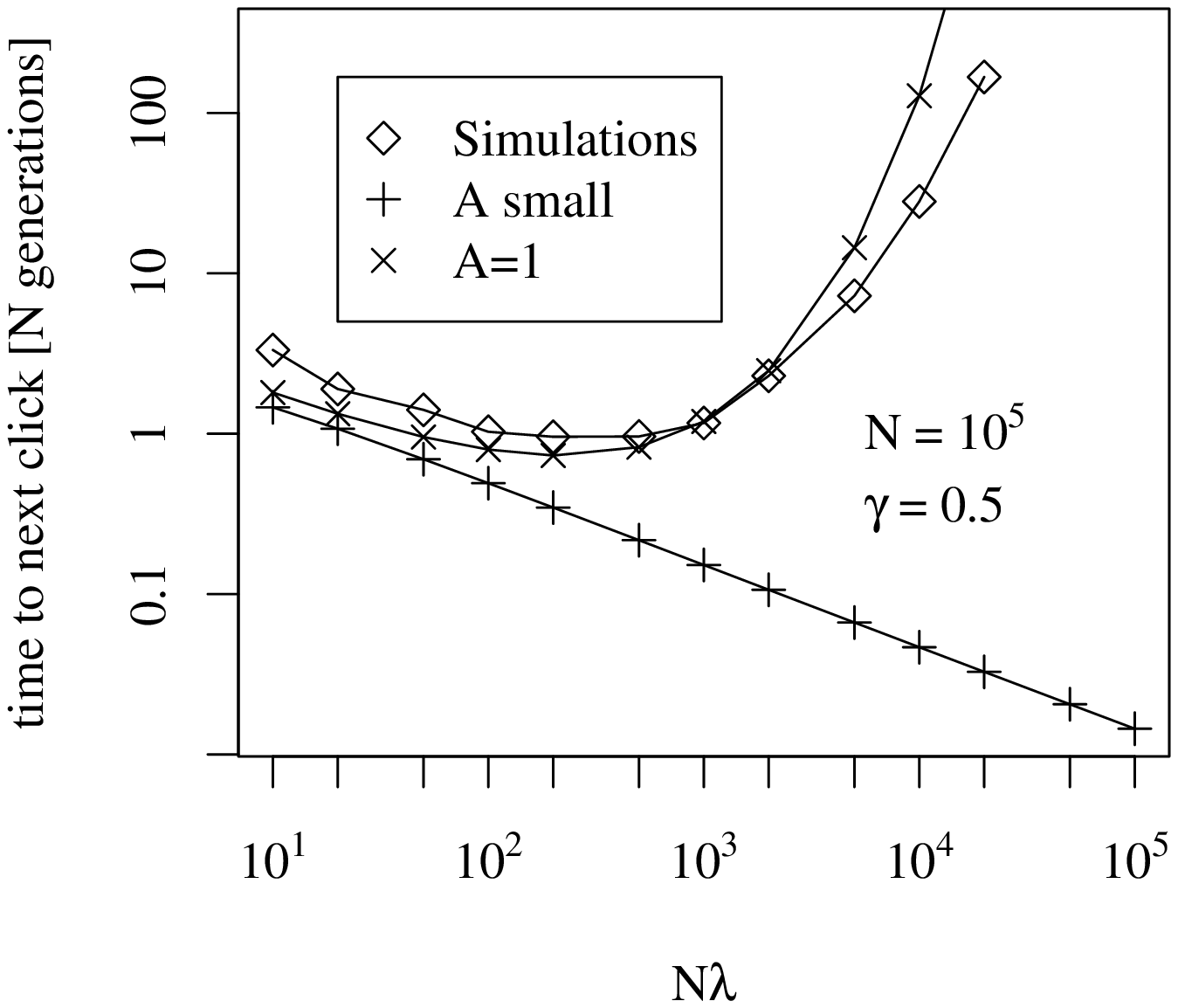}
    \includegraphics[width=5.5cm]{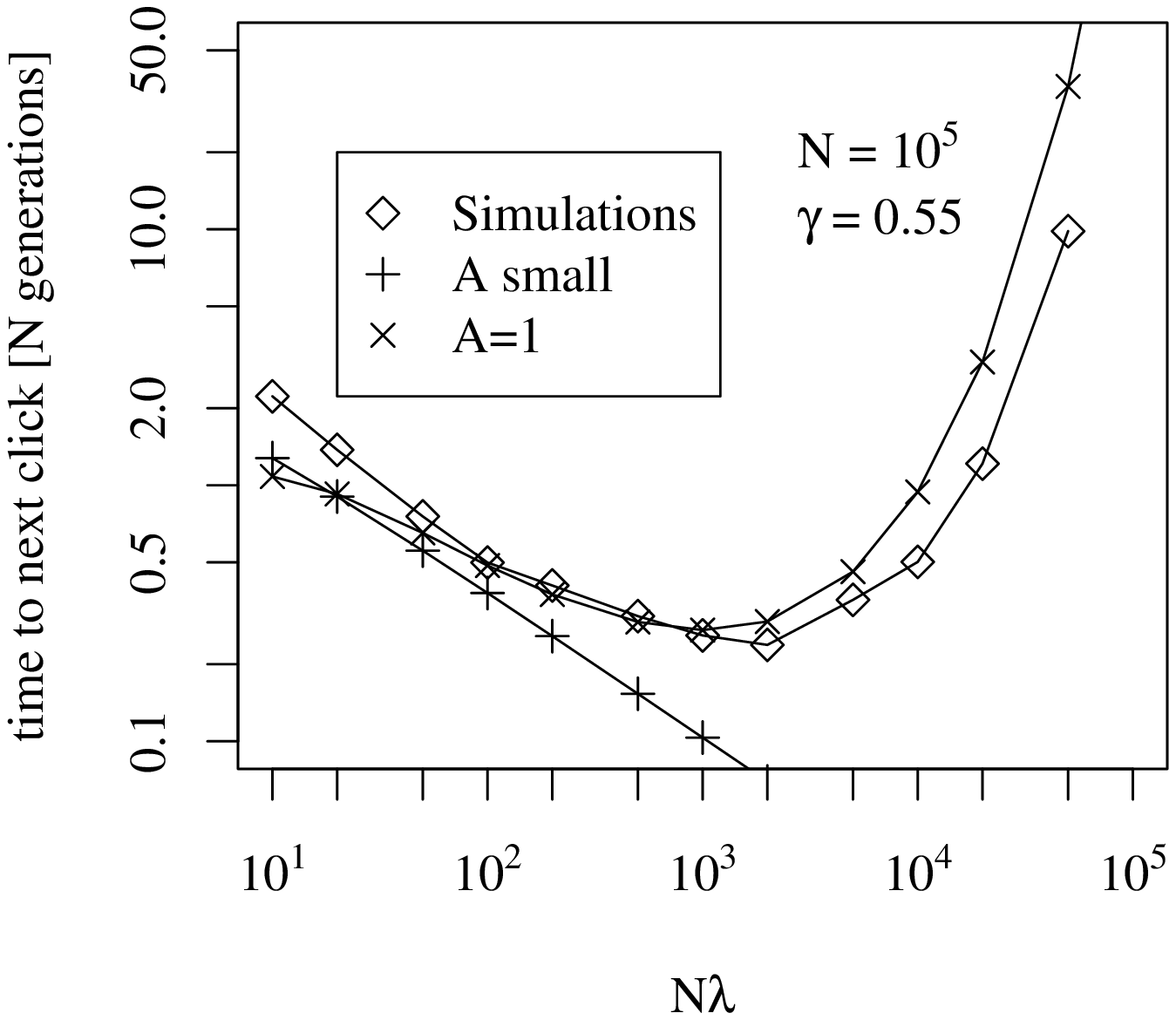}
  \end{center}
  \begin{center}
    \vspace{-.3cm}
    \hspace{.5cm} (C) \hspace{5cm} (D)
    \vspace{-.3cm}
  \end{center}
  \begin{center}
    \vspace{-1cm}
    \includegraphics[width=5.5cm]{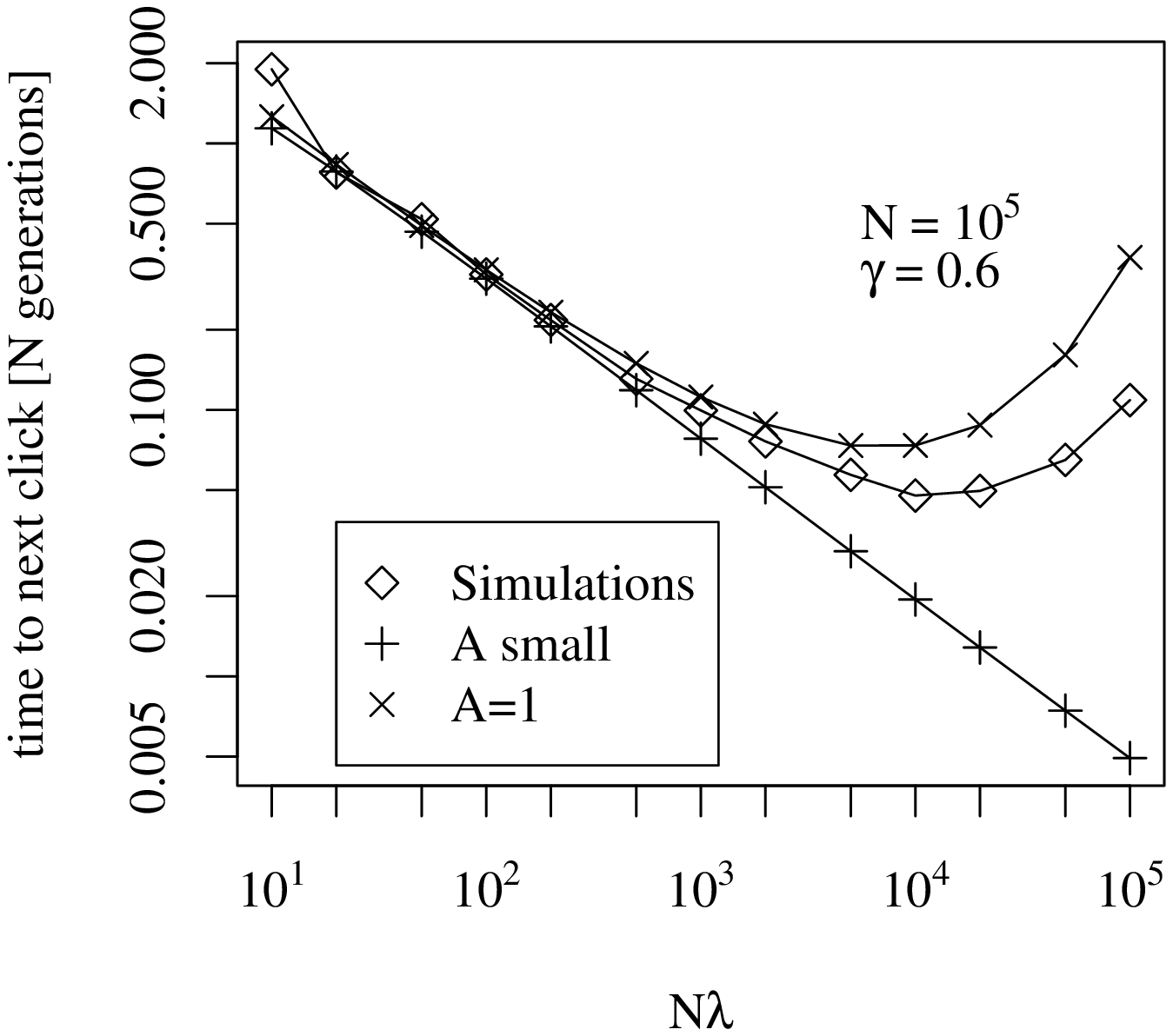}
    \includegraphics[width=5.5cm]{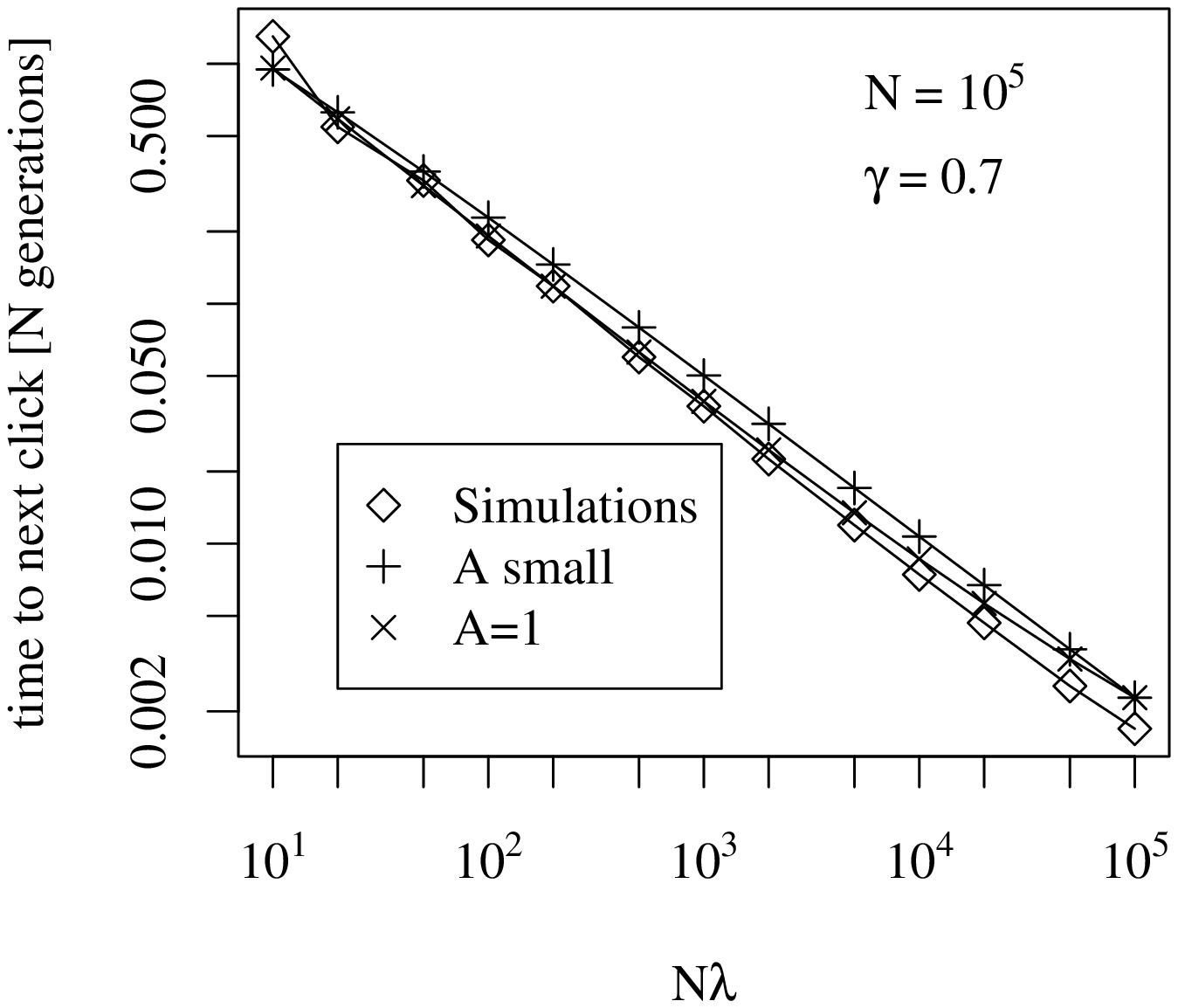}
  \end{center}
  \begin{center}
    \vspace{-.3cm}
    \hspace{.5cm} (E) \hspace{5cm} (F)
    \vspace{-.3cm}
  \end{center}
  \begin{center}
    \vspace{-1cm}
    \includegraphics[width=5.5cm]{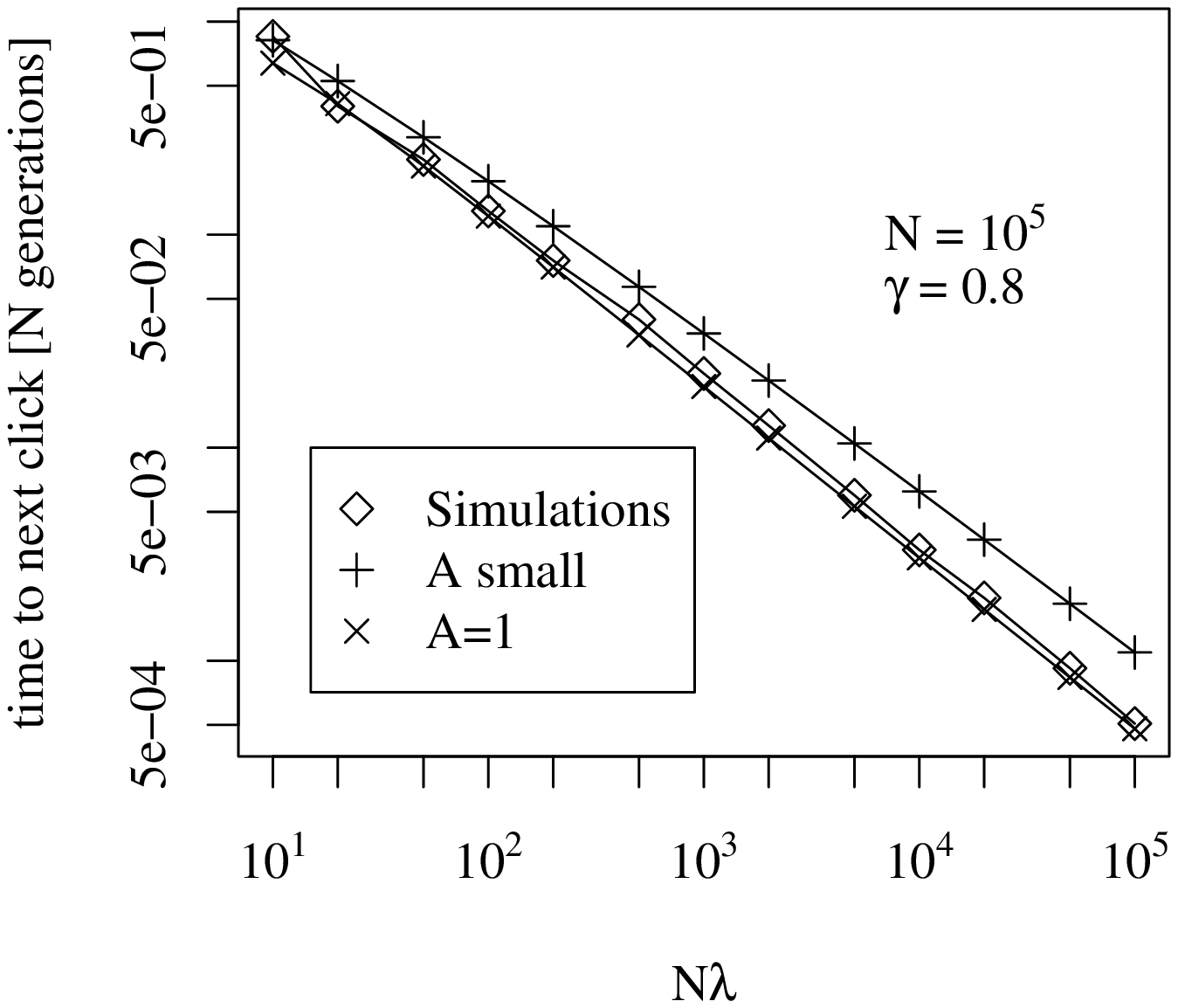}
    \includegraphics[width=5.5cm]{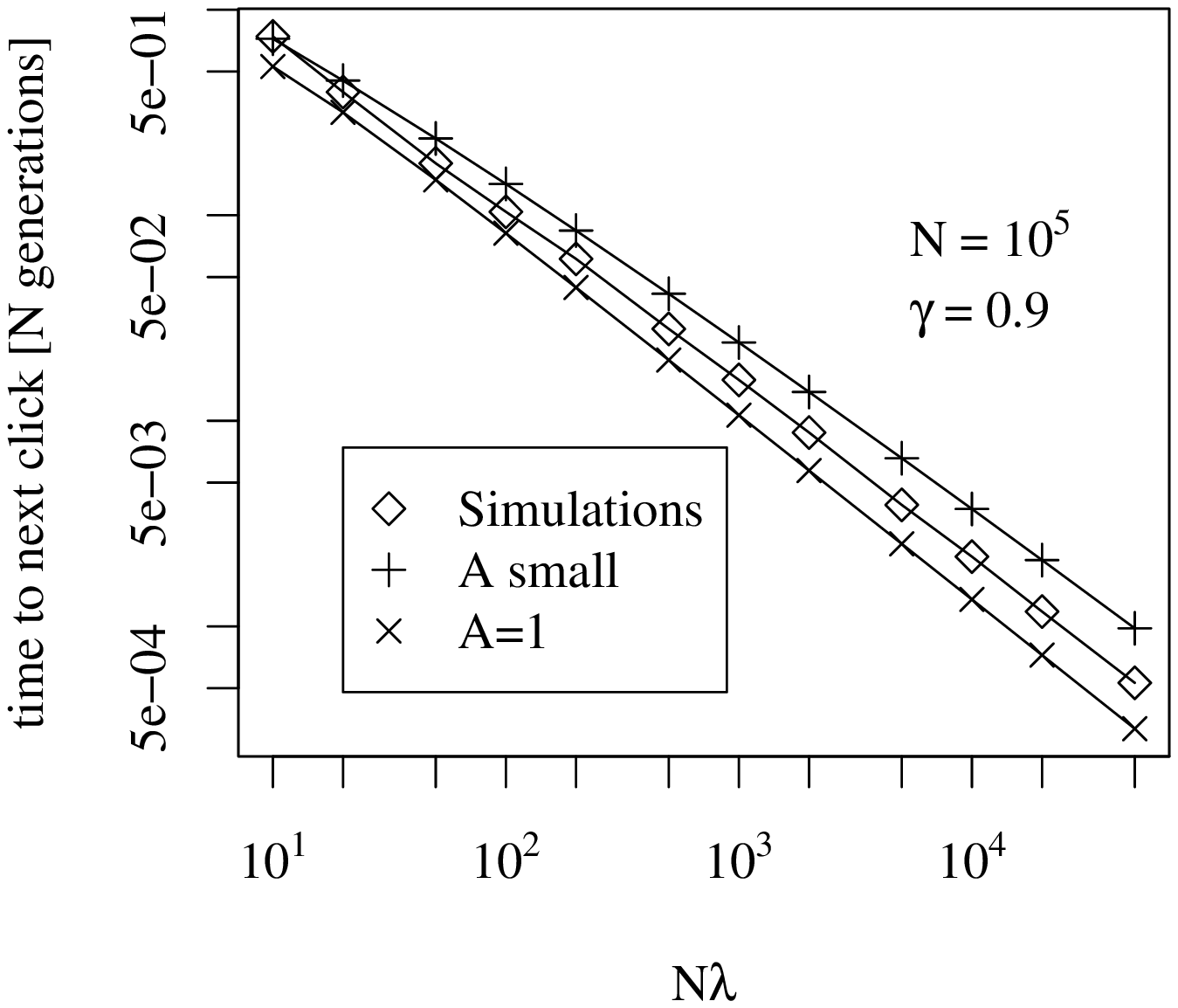}
  \end{center}
  \caption{\label{fig:power} The power law behaviour of the rate of
    the ratchet with respect to $\gamma$ is valid for a large portion
    of the parameter space. (A) For $\gamma= 0.5$ clicks become rare
    and the power law does not apply for $N\lambda>10^3$.  (B), (C)
    For $\gamma=0.55$ and $\gamma=0.6$, we have to explore a larger
    portion of the parameter space in order to see that the power law
    does not apply any more.  (D), (E), (F) For $\gamma\geq 0.7$ we
    never observe a deviation from the power law. For every plot, we
    used $N=10^5$ and simulations ran for $5\cdot10^6$ ($\gamma=0.5$:
    $2\cdot 10^7$) generations for each value of $N\lambda$. }
\end{figure}

To supplement the numerical results of Figure~\ref{fig1} we provide
simulation results for the average time between clicks (where time is
measured in units of $N$ generations) for fixed $N$ and $\gamma$ and
varying $\lambda$; see Figure~\ref{fig:power}. Note that, for fixed
$\gamma$ in equation \eqref{scaling}, $s$ is increasing with $\lambda$
.  We carry out simulations using a population size of $N=10^5$ and
$\lambda$ varying from $10^{-4}$ to $1$. For $\gamma=0.5$ we observe
that the power law behaviour breaks down already for $N\lambda=10^3$
and the diffusion \eqref{eq:three2b} predicts the clicking of the
ratchet sufficiently well. For increasing $\gamma$, the power law
breaks down only for larger values of $N\lambda$. For $\gamma=0.7$, in
our simulations we only observe the power law behaviour but conjecture
that for larger values of $N\lambda$ the power law would break down;
compare with the table above.

\begin{figure}
  \begin{center}
    \hspace{.5cm} (A) \hspace{5.5cm} (B)
  \end{center}
  \begin{center}
    \vspace{-1cm}
    \includegraphics[width=6cm]{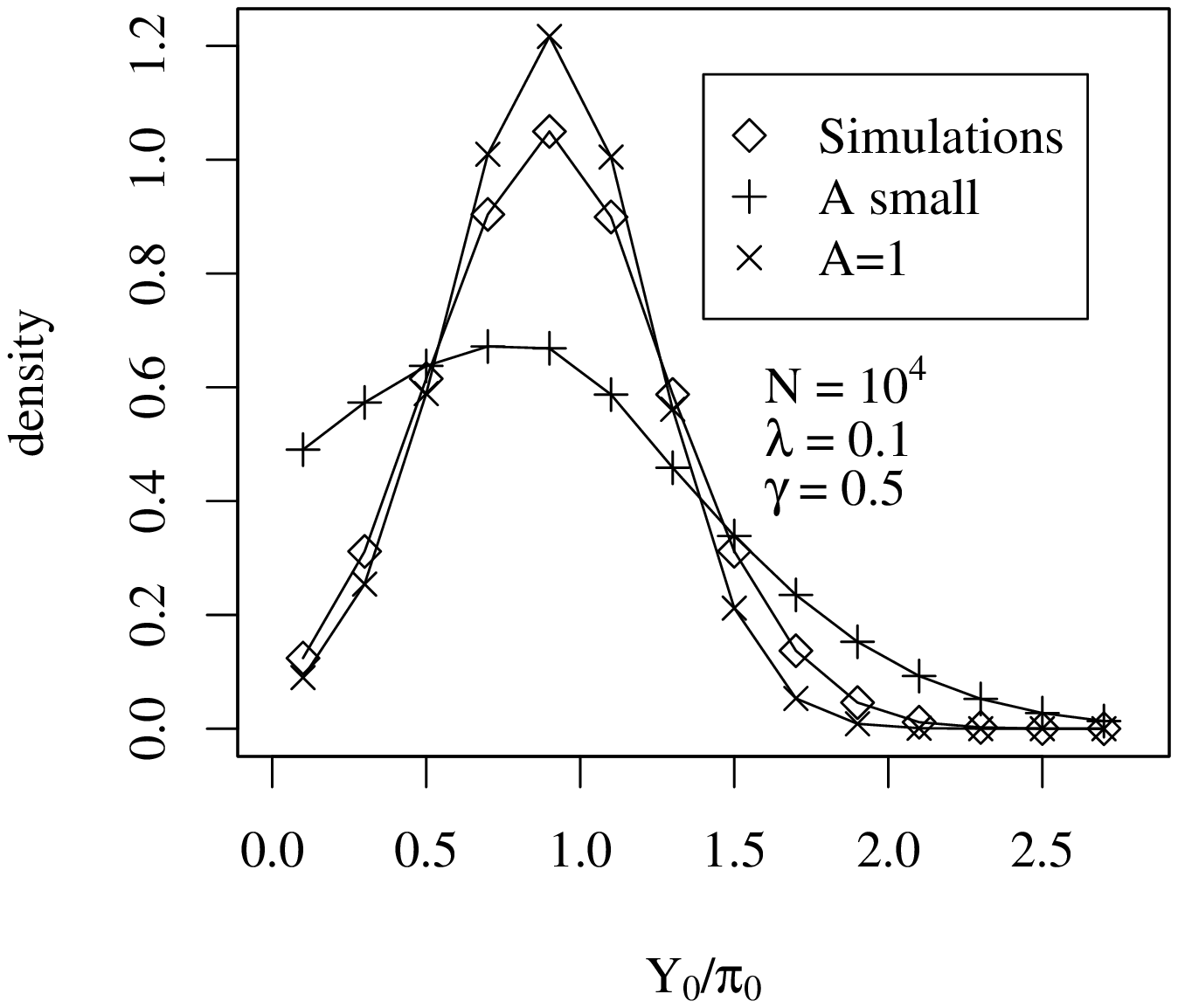}
    \includegraphics[width=6cm]{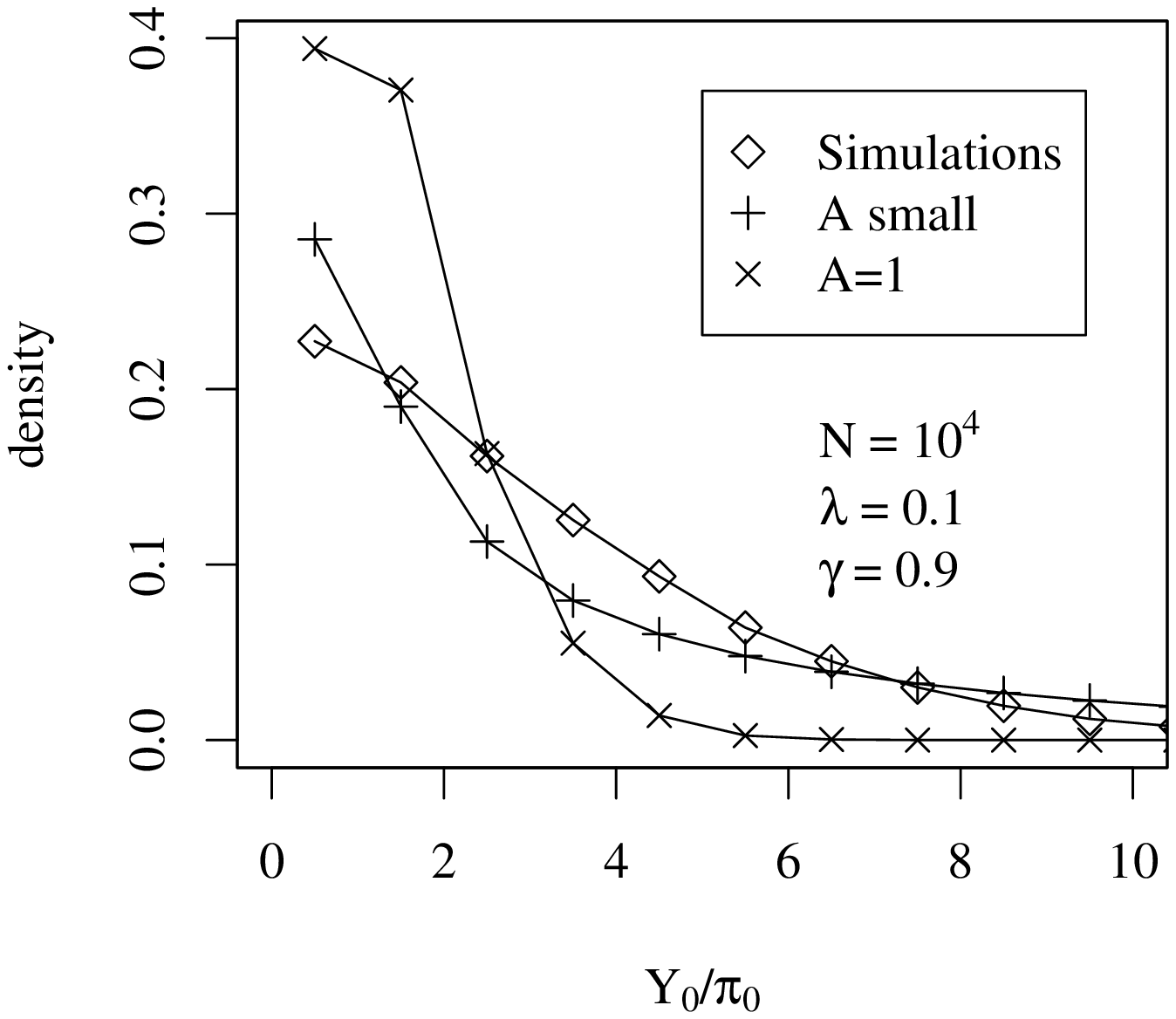}
  \end{center}
  \caption{\label{fig:green}We compare the plots for the occupation
    density of $Y_0$ from the simulations with theoretical curves
    corresponding to the Green functions for the cases of small $A$
    and $A=1$ in \eqref{eq:three2}. (A) If clicks are rare, $A=1$
    produces better results than small $A$. (B) If clicks are
    frequent, the simulated densities of $Y_0$ are better approximated
    by small $A$. Every plot is based on the simulation of $5\cdot
    10^5$ generations.}
\end{figure}

For a finer analysis of which of the equations \eqref{eq:three2} works
best, we study the resulting Green functions numerically; see Figure
\ref{fig:green}. In particular, we record the relative time spent in
some $dY_0$ in simulations and compare this quantity to the
numerically integrated, normalised Green functions given through the
diffusions \eqref{eq:three2a} and \eqref{eq:three2b}. (We do not
consider \eqref{eq:three2c} because it only gives an approximation if
the ratchet clicks rarely.) We see in (A) that for $\gamma=0.5$ not
only does \eqref{eq:three2b} produce better estimates for the average
time between clicks (Figure \ref{fig:power}) but also for the time
spent around some point $y_0$. However, for $\gamma=0.9$, clicks are
more frequent and we expect \eqref{eq:three2a} to provide a better
approximation.  Indeed, although both \eqref{eq:three2a} and
\eqref{eq:three2b} predict the power law behaviour, as (B) shows, the
first equation produces better estimates for the relative amount of
time spent in some $dY_0$.

~

\begin{figure}
  \begin{center}
    \vspace{-.3cm}
    \hspace{.5cm} (A) \hspace{5cm} (B)
    \vspace{-.3cm}
  \end{center}
  \begin{center}
    \vspace{-1cm}
    \includegraphics[width=5.5cm]{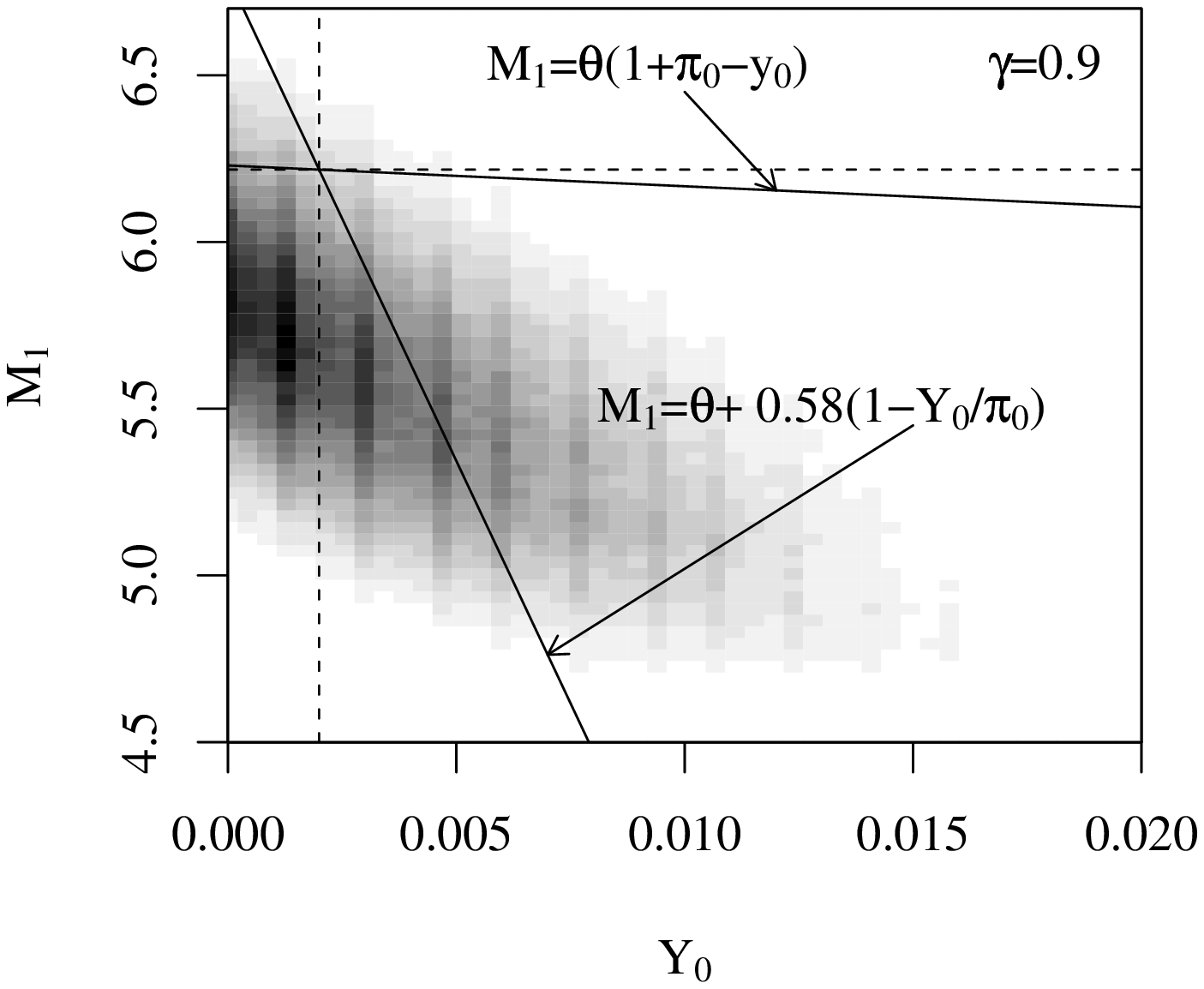}
    \includegraphics[width=5.5cm]{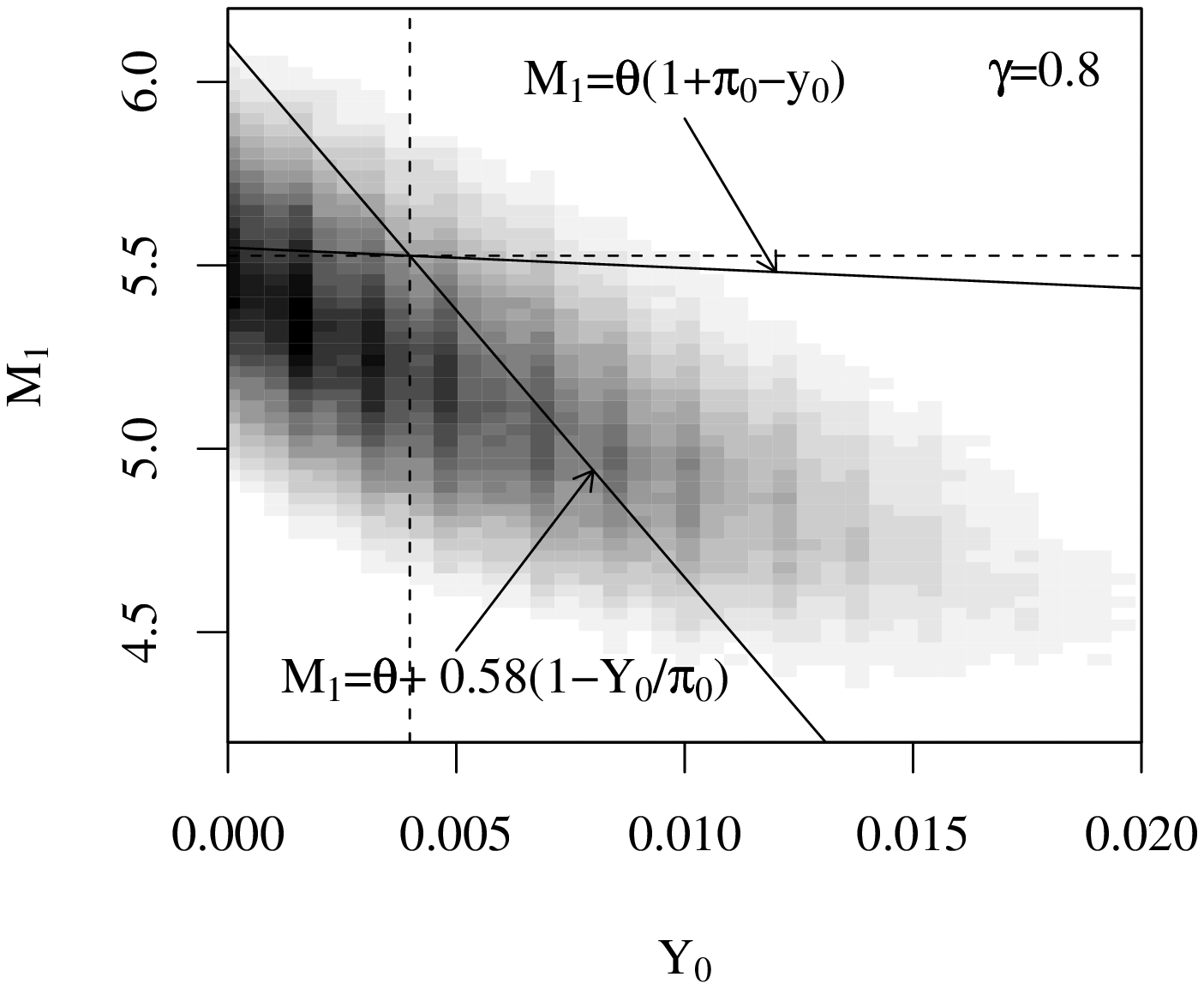}
  \end{center}
  \begin{center}
    \vspace{-.3cm}
    \hspace{.5cm} (C) \hspace{5cm} (D)
    \vspace{-.3cm}
  \end{center}
  \begin{center}
    \vspace{-1cm}
    \includegraphics[width=5.5cm]{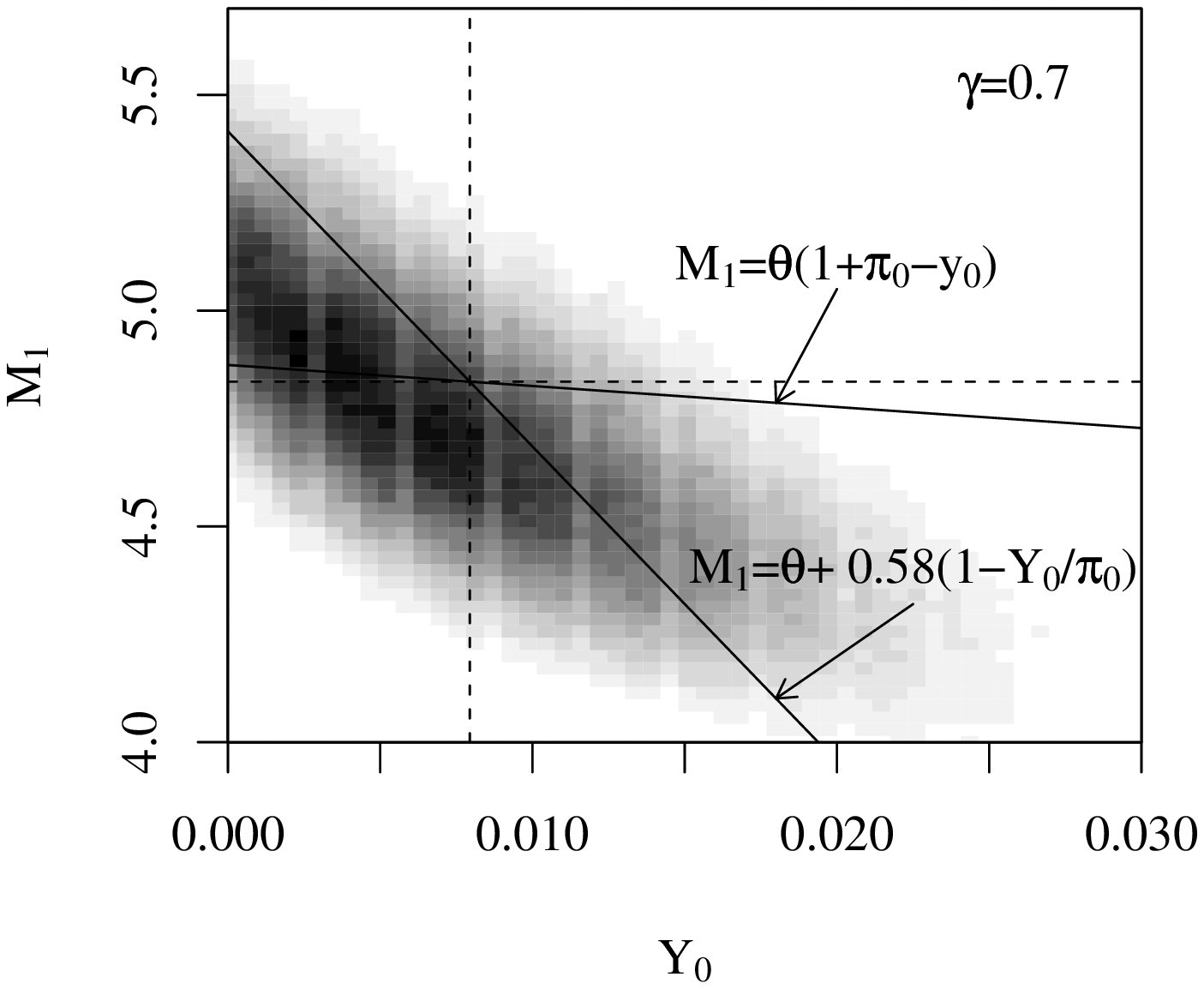}
    \includegraphics[width=5.5cm]{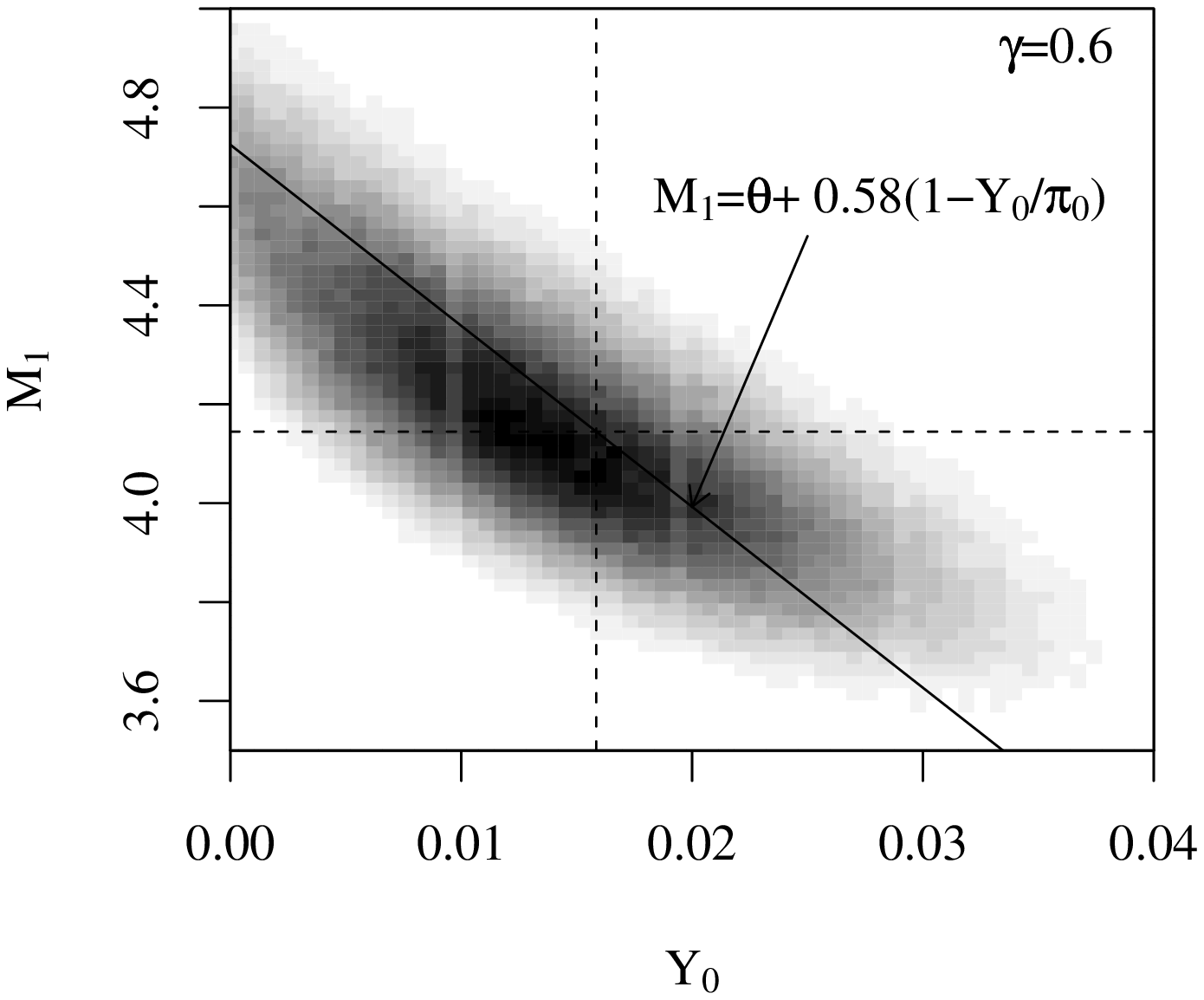}
  \end{center}
  \begin{center}
    \vspace{-.3cm}
    \hspace{.5cm} (E) \hspace{5cm} (F)
    \vspace{-.3cm}
  \end{center}
  \begin{center}
    \vspace{-1cm}
    \includegraphics[width=5.5cm]{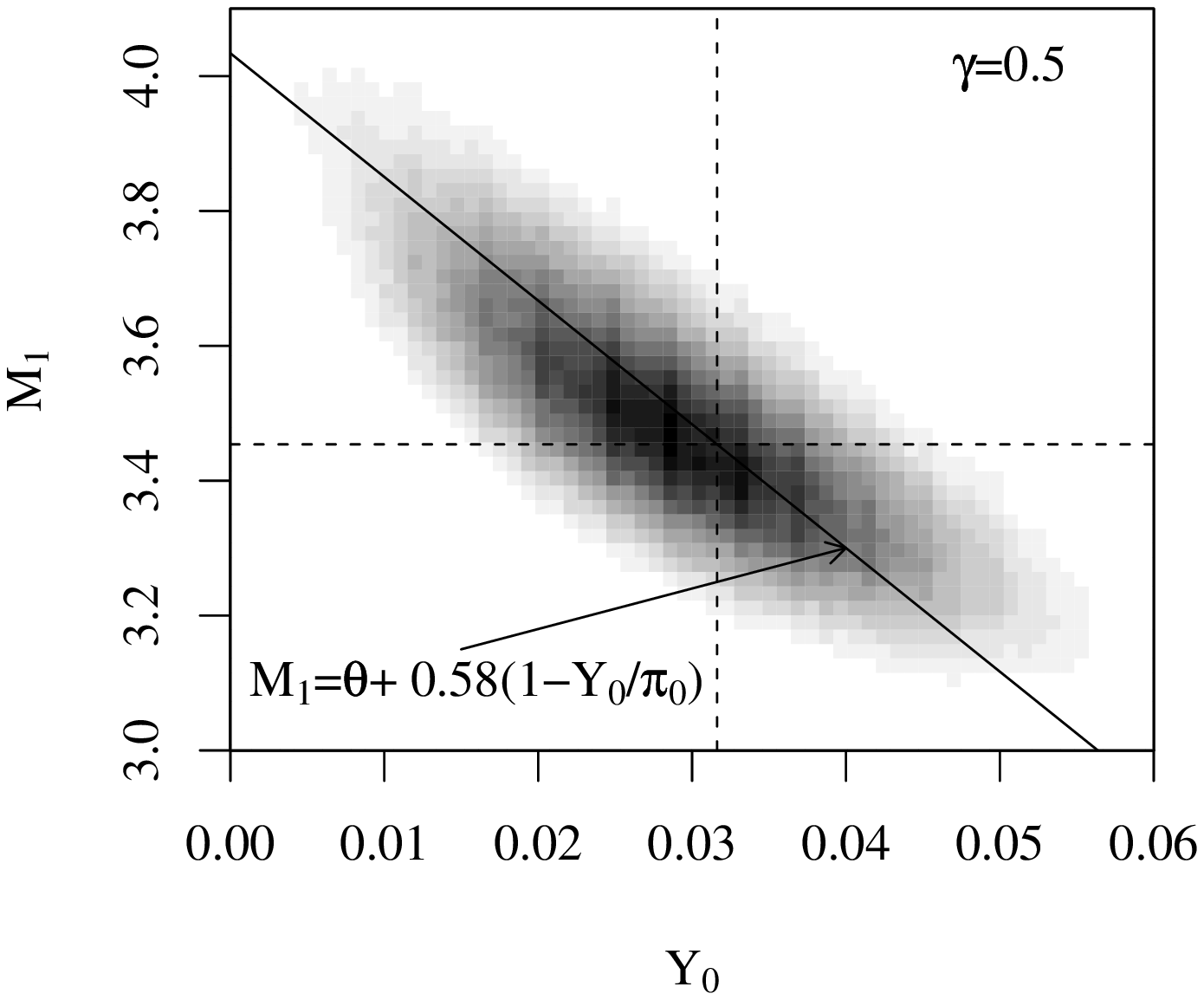}
    \includegraphics[width=5.5cm]{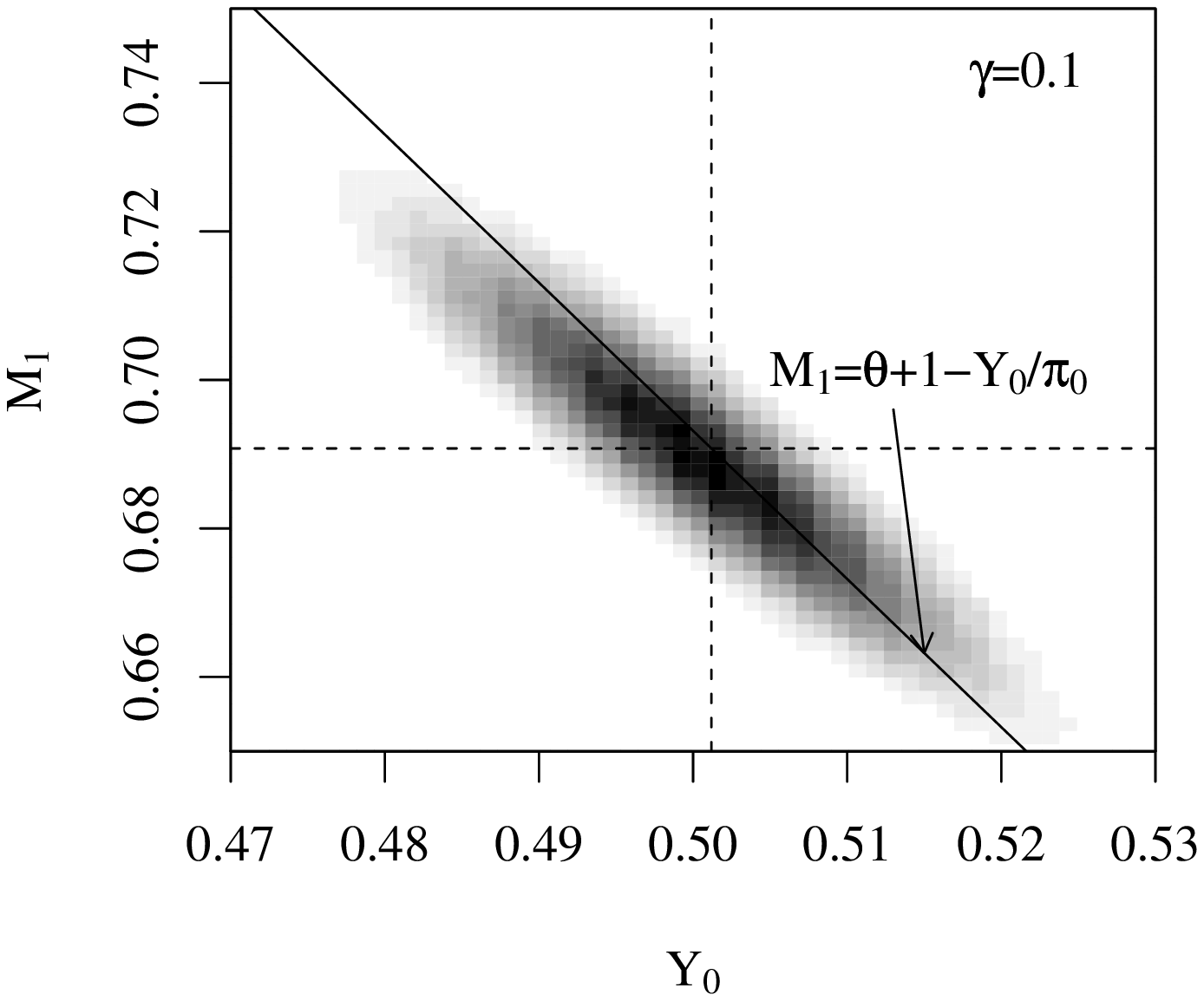}
  \end{center}
  \caption{\label{fig:phaseplane}There are three regimes for the
    relationship between $Y_0$ and $M_1$, as given in
    \eqref{eq:three}. If clicks are frequent, at least the slope of
    the relationship between $Y_0$ and $M_1$ fits roughly to equation
    \eqref{eq:threea}. If clicks occur reasonably often,
    \eqref{eq:threeb} gives a good approximation. If clicks are rare,
    \eqref{eq:threec} gives a reasonable prediction. The plots show
    simulations for different values of $\gamma$. The dashed
    horizontal and vertical lines are $M_1=\theta$ and $Y_0=\pi_0$,
    respectively. For every plot we used $N=10^4, \lambda=0.1$ and
    simulations ran for $10^6$ generations.}
\end{figure}

~

To support our claim that the states observed are relaxed Poisson
Profile Approximations we use a phase-plane analysis; see
Figure~\ref{fig:phaseplane}. At any point in time of a simulation,
values for $Y_0$ and $M_1$ can be observed. The resulting plots
indicate that we can distinguish the three parameter regimes
introduced in \S\ref{one-dimensional diffusion}. In the case of rapid
clicking of the ratchet (so that the states we observe have not
relaxed a lot and thus are approximately of the form $\Pi(y_0)$) we
see in (A), (B) that the system is driven by the restoring force to
$M_1<\theta$.  The reason is that $M_1$ is small at click times and
these are frequent.  However, the slope of the line relating $Y_0$ and
$M_1$ is low, as predicted by \eqref{eq:threea}. (We used
$(1-Y_0)/(1-\pi_0)\approx 1-Y_0+\pi_0$ in the plot here.) For the case
$A=1$ the system spends some time near $Y_0=0$ and thus the ratchet
clicks, but not frequently. So, the dynamical system restores states
partly to equilibrium and we see that the slope given in
\eqref{eq:threeb} gives the most reasonable prediction in (C), (D),
(E).  For rare clicking, i.e. $A$ large, the dynamical system has even
more time and (F) shows that the slope is as predicted by
\eqref{eq:threec}.

\begin{figure}
  \begin{center}
    \hspace{.5cm} (A) \hspace{5.5cm} (B)
  \end{center}
  \begin{center}
    \vspace{-1cm}
    \includegraphics[width=6cm]{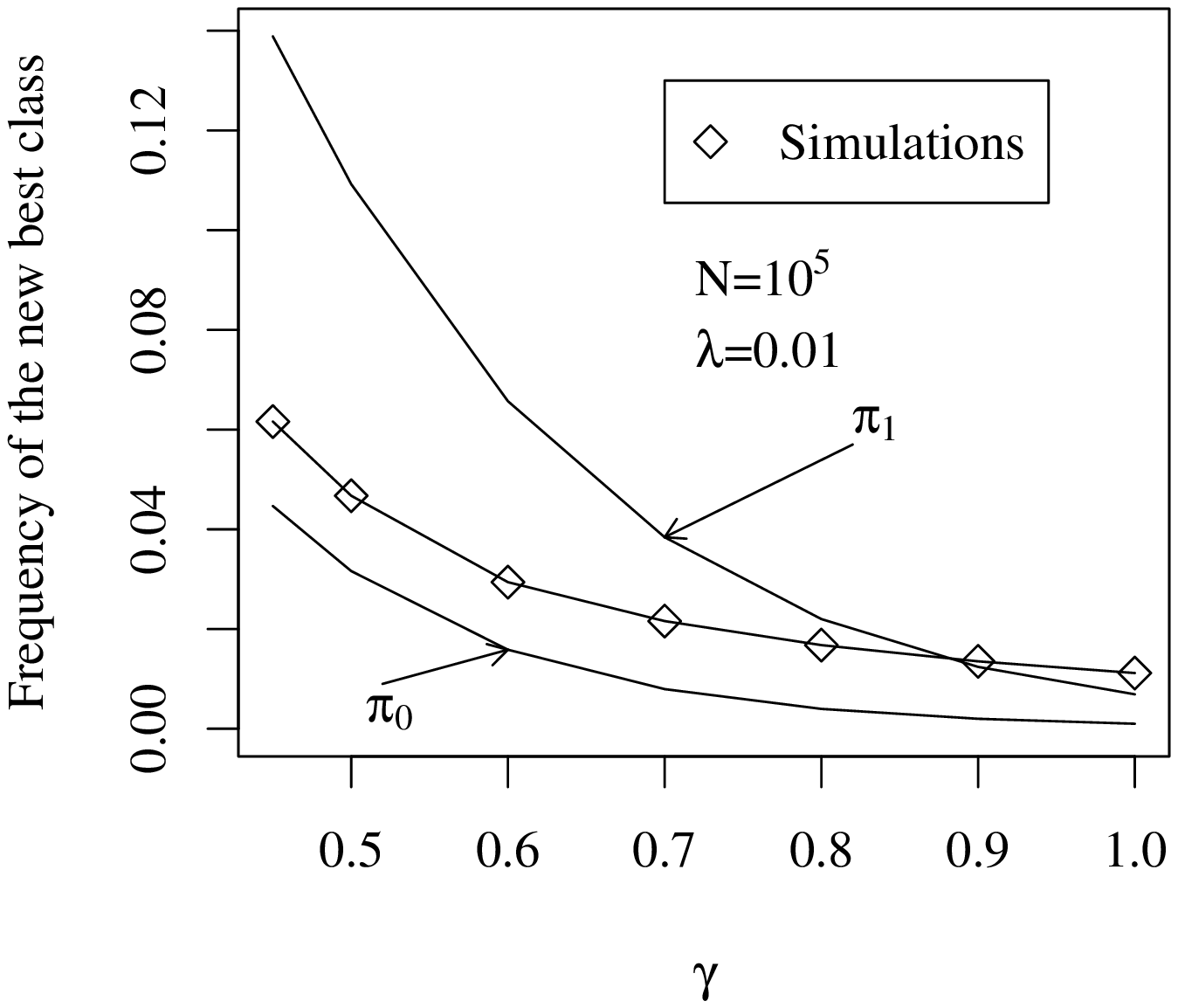}
    \includegraphics[width=6cm]{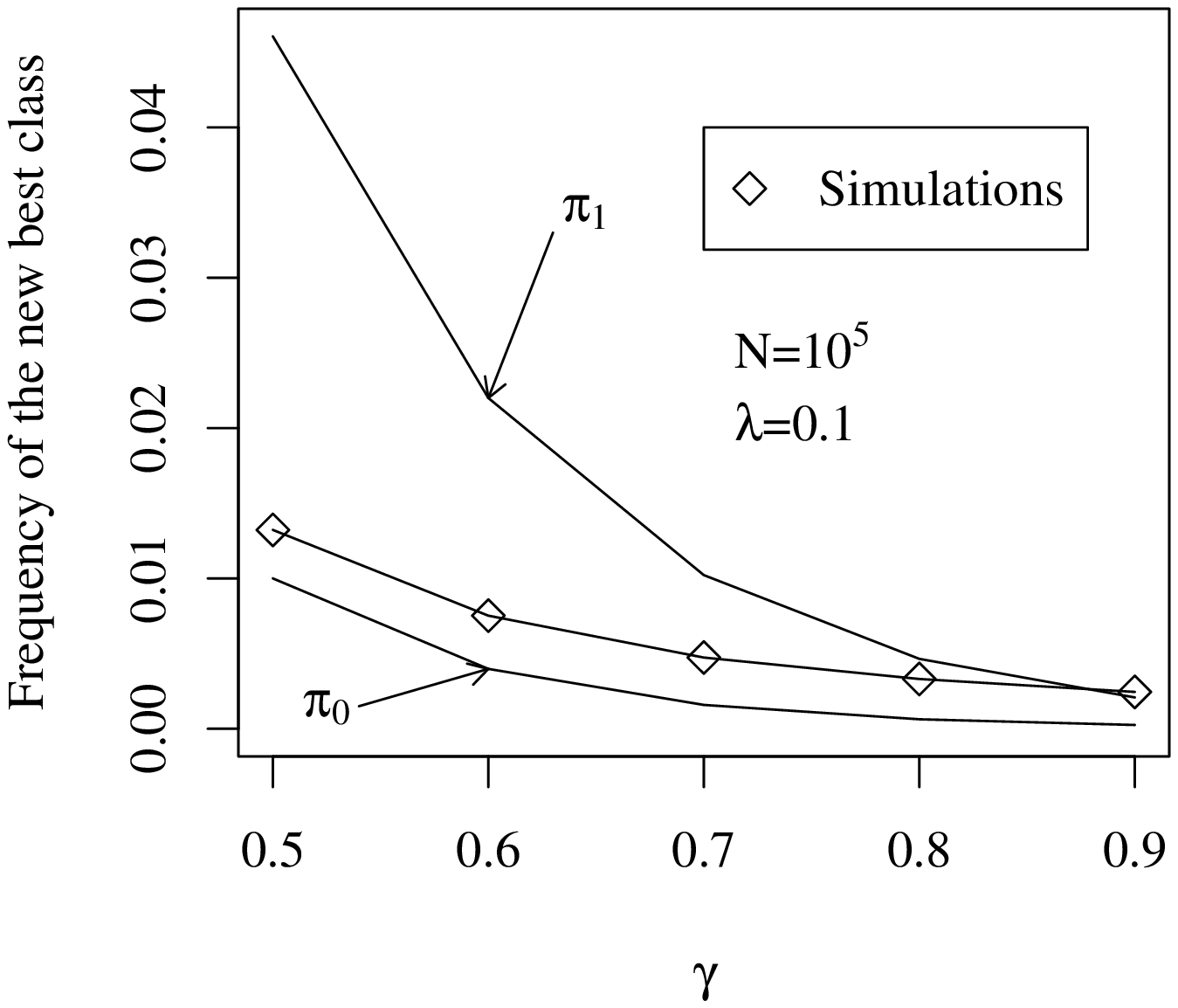}
  \end{center}
  \caption{\label{fig:bestClickTimes}Our heuristic that observed
    states come from a RPPA applies in particular at click times. (A)
    For small $\gamma$, i.e., rare clicking, the frequency of the best
    class is already close to $\pi_0$ while (B) it is close to $\pi_1$
    for larger $\gamma$, i.e., frequent clicking. Every plot is based
    on the simulation of $5\cdot 10^6$ generations.}
\end{figure}

Our prediction that we observe profiles which are well approximated by
a relaxed PPA applies especially well at click times. We check this
numerically by observing the frequency of the (new) best class at
click times; see Figure~\ref{fig:bestClickTimes}. For small $\gamma$
the ratchet clicks rarely and the system has some time to relax to its
new equilibrium even before the click of the ratchet. As a
consequence, we see that the frequency of the (new) best class at the
time of the click is already close to $\pi_0$. However, if $\gamma$ is
large and the ratchet clicks frequently, the dynamical system has no
time before the click to relax the system to the new equilibrium.
Therefore, we observe that the frequency of the new best class is
close to $\pi_1$.

\section{Discussion}
\label{discussion}
Haigh~(1978) was the first to attempt a rigorous mathematical analysis of 
the ideas of Muller~(1964).  However, in spite of the apparent simplicity of 
Haigh's mathematical formulation of the model, 
the exact rate of Muller's ratchet remains elusive.  
In this note, we have developed arguments in the
spirit of Haigh~(1978), Stephan et al.~(1993) and Gordo \&
Charlesworth~(2000) to give approximations for this rate.


Haigh gave
the empirical formula
$$ 4N\pi_0 + 7 \log \theta + \tfrac 2s -20$$
for the average time between clicks of the ratchet (where time is measured in
generations). 
A quantitative understanding of the rate was
first obtained by Stephan et al.~(1993) using diffusion approximations
and later extended by Gordo \& Charlesworth (2000). Both obtain the
diffusion \eqref{eq:three2b} as the main equation giving a valid
approximation for the frequency path of the best class.

The reasoning leading to \eqref{eq:three2b} in these papers is
twofold. Stephan et al.~(1993) and Stephan \& Kim~(2002) argue that
although fitness decreases by $se^{-\lambda}$ during one `cycle' of
the discrete ratchet model from \S\ref{haigh} (in which the system
advances from one Poisson equilibrium to the next), at the actual
click time only a fraction of the fitness has been lost.  They suggest
$kse^{-\lambda}$ for $k=0.5$ or $k=0.6$ as the loss of fitness at
click times. In other words they predict the functional relationship
$M_1(Y_0)$ discussed in \S\ref{one-dimensional diffusion} by linear
interpolation between $M_1(\pi_0)=\theta$ and $M_1(0) = \theta + k
e^{-\lambda}\approx \theta + k$; compare with Figure \ref{fig:three}.
On the other hand, Gordo \& Charlesworth~(2000) use a calculation of
Haigh which tells us that if the dynamical system~\eqref{DR} is
started in $\tilde\pi$ from \eqref{pitilde}, then at the end of phase
one (corresponding in the continuous setting, as we observed in
Remark~\ref{phase1}, to time $\tfrac 1s \log\theta$) we have
$sM_1\approx 1 - e^{-\lambda} (1+0.42 s)$.  This leads to the
approximation $M_1(1.6\pi_0) = \theta- 0.42$ and again interpolating
linearly using $M_1(\pi_0)=\theta$ gives \eqref{eq:three2b}.

Simulations show that \eqref{eq:three2b} provides a good approximation
to the rate of the ratchet for a wide range of parameters; see e.g.
Stephan and Kim~(2002).  The novelty in our work is that we derive
\eqref{eq:three2b} explicitly from the dynamical system. In
particular, we do not use a linear approximation, but instead derive a
functional linear relationship in Proposition \ref{p:relax}. In
addition, we clarify the r\^ole of the two different phases suggested
by Haigh. As simulations show, since phase one is fast, it is already
complete at the time when phase two starts. Therefore, in practice, we
observe states that are relaxed PPAs.

The drawback of our analysis is that we cannot give good arguments for
the choice of $A=1$ in \eqref{eq:threeb} and \eqref{eq:three2b}.
However, note that the choice of $A=1$ is essential to obtain the
prefactor of 0.58 in \eqref{eq:threeb}. E.g., if $\theta=10$, the
choice of $A=0.5$ leads to a prefactor of 0.13 while $A=2$ leads to
the prefactor of 0.95, neither of which fits with simulated data; see
Figure \ref{fig:phaseplane}.

We obtain two more diffusion approximations, which are valid in the
cases of frequent and rare clicking, respectively. In practice, both
play little r\^ole in the prediction of the rate of the ratchet. For
fast clicking, \eqref{eq:three2b} shows the same power law behaviour
as \eqref{eq:three2a} and rare clicks are never observed in
simulations.

Of course from a biological perspective our mathematical model is very
naive.  In particular, it is unnatural to suppose that each new
mutation confers the same selective disadvantage and, indeed, not all
mutations will be deleterious.  Moreover, if one is to argue that
Muller's ratchet explains the evolution of sex, then one has to
quantify the effect of recombination.  Such questions provide a rich,
but challenging, mathematical playground.

~

\noindent{\bf Acknowledgements}
We have discussed Muller's ratchet with many different people.  We are
especially indebted to Ellen Baake, Nick Barton, Matthias Birkner,
Charles Cuthbertson, Don Dawson, Wolfgang Stephan, Jay Taylor and Feng
Yu.

\end{document}